\DeclareMathOperator*{\argmax}{arg\,max}
\DeclareMathOperator*{\argmin}{arg\,min}
\useunder{\uline}{\ul}{}
\newtheorem{defin}{{Definition}}
\newtheorem{assumption}{{Assumption}}
\newtheorem{theorem}{Theorem}
\newtheorem{lemma}{Lemma} 
\newtheorem{remark}{Remark}
\newcommand{\be}{\begin{equation}}
\newcommand{\ee}{\end{equation}}
\newcommand{\Proj}{\textrm{Proj}}
\newcommand{\proj}{\text{Proj}}
\newcommand{\rgrad}{\mathrm{grad}\,}
\newcommand{\St}{\mathrm{St}}
\providecommand{\keywords}[1]
{
  \small	
  \textbf{\textit{Keywords---}} #1
}
 \def\R{\mathbb{R}}
 \def\M{\mathcal{M}}
 \def\Retr{\mathrm{Retr}}
 \def\dom{\mathrm{dom}}
\title{An Inexact Manifold Proximal Linear Algorithm with Adaptive Stopping Criteria}
\author[1]{Zhong Zheng}
\author[2]{Xin Yu}
\author[3]{Shiqian Ma}
\author[2]{Lingzhou Xue}
\affil[1]{University of Pennsylvania}
\affil[2]{The Pennsylvania State University}
\affil[3]{Rice University}
\date{First Version: October, 2024 \\ This Version: November, 2025}
\begin{document}
\maketitle
\begin{abstract}
This paper proposes a new inexact manifold proximal linear (IManPL) algorithm for solving nonsmooth, nonconvex composite optimization problems over an embedded submanifold. At each iteration, IManPL solves a convex subproblem inexactly, guided by two adaptive stopping criteria. We establish convergence guarantees and show that IManPL achieves the best first-order oracle complexity for solving this class of problems. Numerical experiments on sparse spectral clustering and sparse principal component analysis demonstrate that our methods outperform existing approaches.
\end{abstract}




\keywords{Nonconvex Optimization, Composite Optimization, Manifold Optimization, Proximal Linear Algorithm, Inexactness, Sparse Principal Component Analysis, Sparse Spectral Clustering} 



\section{Introduction}
In this paper, we focus on the following nonsmooth and nonconvex composite optimization
problem over a compact embedded submanifold: 
\begin{equation}\label{problem_single}
	\min_{z \in \mathcal{M}} F(z) :=  f(z)+h(c(z)).
\end{equation}
Here $\mathcal{M}$ denotes a compact submanifold embedded in $\R^n$, $f:\R^n\rightarrow \mathbb{R}, c:\R^n\rightarrow \mathbb{R}^m$ are smooth mappings, and $h:\mathbb{R}^m\rightarrow \mathbb{R}$ is a convex function with a tractable proximal mapping. Here, the convexity and smoothness of the functions are interpreted as the functions being considered in the ambient space. Many important problems in statistics, machine learning, and signal processing can be cast into the form of problem \eqref{problem_single}. Examples include sparse principal component analysis (SPCA) \cite{jolliffe2003modified,zou2018selective,yang2025theoretical}, fair PCA \cite{samadi2018price,zalcberg2021fair,xu2023efficient2}, sparse canonical correlation analysis (SCCA) \cite{deng2024oracle,chen2019alternating,hardoon2011sparse}, sparse spectral clustering (SSC) \cite{lu2016convex,lu2018nonconvex,wang2022manifold}, orthogonal dictionary learning \cite{sun2016complete,sun2016complete2,qu2014finding,demanet2014scaling,spielman2012exact},
and beamforming design \cite{liu2024survey}. Below, we highlight two representative applications.
\begin{itemize}
	\item \textbf{Sparse spectral clustering (SSC)}. Spectral clustering \cite{chung1997spectral} is a graph-based clustering method in unsupervised learning. Given $N$ data points that belong to $r\leq N$ classes and a nonnegative symmetric similarity matrix $\hat{S}\in\mathbb{R}^{N\times N}$, SSC forms a normalized Laplacian matrix $S :=  I_N - D^{-1/2} \hat{S} D^{-1/2}$ where $D:=  \mbox{diag}(d_1,d_2\ldots d_N)$, in which $d_i$ represents the sum of the elements in the $i-$th row of $\hat{S}, i = 1,2\ldots N$. In \cite{wang2022manifold}, the main step of SSC is to solve the following optimization problem: 
	\begin{equation}\label{eq_ssc}
		\min_{U\in\St(N,r)} \left\langle U, SU\right\rangle + \kappa\|\mbox{vec}(UU^\top )\|_1.
	\end{equation}
	Here, $\St(N,r)=\{Z\in\R^{N\times r}\mid Z^\top Z = I_r\}$ is the Stiefel manifold.
	The solution of \eqref{eq_ssc} is then passed to the K-Means algorithm for the final clustering. SSC is an important example of the general situation of \eqref{problem_single} where $c(\cdot)$ is a nonlinear mapping.
	\item \textbf{Sparse principal component analysis (SPCA).} Principal Component Analysis (PCA) \cite{pearson1901liii} is one of the most fundamental statistical tools
	for analyzing high-dimensional data. SPCA seeks principal components with very few nonzero components. For given data matrix $A\in\mathbb{R}^{N_1\times N}$ and $\kappa>0$, SPCA seeks the leading $r$ ($r\leq \min\{N_1,N\}$) sparse
	loading vectors and can be formulated as 
	\begin{equation}\label{eq_spca}
		\min_{U\in\St(N,r)} -\|AU\|_F^2 + \kappa \|\mbox{vec}(U)\|_1.
	\end{equation}
	Here, $\kappa>0$ is a weighting parameter. The operator $\mbox{vec}(\cdot)$ stacks the columns of $A\in \mathbb{R}^{m\times n}$ into a single column vector of size $mn\times 1$. SPCA is an important example of \eqref{problem_single}, where $c(\cdot)$ is the identity mapping.
	
	
\end{itemize}

\subsection{Existing Works and Challenges}

A variety of algorithms have been developed to address the problem \eqref{problem_single} or its simplified variants. These include Riemannian subgradient-type methods \cite{borckmans2014riemannian,hosseini2017riemannian,hosseini2018line,li2021weakly,hu2023constraint}, Riemannian proximal methods \cite{chen2020proximal,huang2022riemannian,huang2023inexact,wang2022manifold,chen2024nonsmooth}, Riemannian smoothing-type algorithms \cite{beck2023dynamic,peng2023riemannian,zhang2023riemannian}, splitting-type methods \cite{lai2014splitting,kovnatsky2016madmm,li2023riemannian},Riemannian augmented Lagrangian method \cite{deng2023manifold,zhou2023semismooth,deng2024oracle,zhou2022robust} and Riemannian min-max algorithms \cite{xu2023efficient2,xu2024riemannian}. Among these methods, \cite{xu2024oracle,zhou2023semismooth,zhou2022robust} used the Riemannian augmented Lagrangian method for solving \eqref{problem_single}, and \cite{wang2022manifold} solved the SSC problem \eqref{eq_ssc} by their proposed manifold proximal linear (ManPL) algorithm. Other algorithms mentioned above focus only on simplified cases of \eqref{problem_single}, where $c(\cdot)$ reduces to an identity or linear mapping.

Many of the algorithms mentioned above are double-loop algorithms that require iteratively solving subproblems. For double-loop algorithms designed for Euclidean space, adaptive stopping conditions can provide better control over the subproblem solving compared to fixed or predetermined stopping conditions and thus show better numerical performances (see e.g. \cite{zheng2023new,zheng2024adaptive}). However, the research on adaptive stopping conditions for nonsmooth manifold optimization remains limited. \cite{huang2022riemannian2,huang2023inexact} used adaptive stopping conditions, but their methods only applied to the simplified situation of $c(\cdot)$ being an identity mapping. Moreover, they only discussed the iteration complexities for the outer loops instead of the total number of subproblem iterations. For algorithms that apply to the general \eqref{problem_single},  \cite{xu2024oracle,zhou2023semismooth,zhou2022robust} used predetermined stopping conditions, and the theoretical analysis in \cite{wang2022manifold} assumed that they could exactly solve the subproblems, which is nearly impossible in practice.

In this paper, we consider the Riemannian proximal linear method for solving \eqref{problem_single}. We introduce some notation first. For any $y,z\in\mathbb{R}^n$ and $t>0$, we denote 
$$F_t(z;y) := F(z;y) + \frac{1}{2t}\|z - y\|_2^2,$$ 
where
$$\: F(z;y) := f(y)+\nabla f(y)^\top (z-y)+h(c(y)+\nabla c(y)(z-y)).$$
Here, $\nabla f(\cdot)\in\mathbb{R}^n$ denotes the gradient of $f$, and $\nabla c(\cdot)\in\mathbb{R}^{m\times n}$ denotes the transposed Jacobian matrix of $c$. First, we discuss the proximal linear method for optimization on the Euclidean space, i.e., \eqref{problem_single} when $\M=\R^n$. 
The proximal linear method for solving this problem in Euclidean space iterates as 
\[z_{k+1}\approx \argmin_{z\in\mathbb{R}^n} F_{t_k}(z;z_k).
\]
Here, $t_k>0$ is the step size, and ``$\approx$" means that the strongly convex function $F_{t_k}(\cdot;z_k)$ is minimized inexactly. \cite{drusvyatskiy2019efficiency} analyzed the proximal linear algorithm, inexactly solved the subproblems with first-order algorithms under predetermined stopping conditions, and proved that their algorithm can find an $\epsilon-$stationary point with $O(1/\epsilon^2)$ main iterations and $O(\frac{1}{\epsilon^3}\log \frac{1}{\epsilon})$ total subproblem iterations. \cite{zheng2023new,zheng2024adaptive} proposed the inexact proximal linear algorithms that inexactly solved the subproblem under adaptive subproblem stopping conditions. However, their analysis is limited to the robust phase retrieval problem featured by the sharpness assumption \cite{zheng2024smoothed}.

Next, we discuss the manifold proximal linear (ManPL) algorithm \cite{wang2022manifold}. Denote $\mathrm{T}_z \mathcal{M}$ as the tangent space at $z\in \mathcal{M}$ and $\mathrm{Retr}_z: \mathrm{T}_z \mathcal{M}\rightarrow \mathcal{M}$ as the retraction mapping. In the $k$-th iteration, ManPL inexactly solved the subproblem 
\begin{equation}\label{eq_imanpl}
	\Tilde{z}_{k+1}\approx \argmin_{z\in  z_k+\mathrm{T}_{z_k}\mathcal{M}} F_{t_k}(z;z_k),t_k>0,
\end{equation}
and then update the iterate as:
\begin{equation}\label{eq_retraction}
	z_{k+1} = \Retr_{z_k}(\alpha_k(\Tilde{z}_{k+1} - z_k)),\alpha_k\in (0,1].
\end{equation}
Here, $t_k$ is the step size, and the shrinkage factor $\alpha_k$ is determined by Armijo backtracking line search. \cite{wang2022manifold} proved that ManPL finds an $\epsilon-$stationary point with $O(1/\epsilon^2)$ outer iterations when the subproblem \eqref{eq_imanpl} is solved exactly. 
Moreover, \cite{wang2022manifold} used the proximal point algorithm along with the adaptive regularized semi-smooth Newton method \cite{xiao2018regularized} to solve the subproblem \eqref{eq_imanpl}, which is inefficient due to the high cost of solving linear systems. As a result, when applied to the SSC problem in numerical experiments, ManPL takes more CPU time compared to candidate algorithms on relaxed optimization problems (see Tables 3 and 4 in \cite{wang2022manifold} for more details).

\subsection{Our Methods and Contributions}

Now, we introduce our inexact manifold proximal linear  (IManPL) algorithm. In the $k$-th iteration, our IManPL inexactly solves the subproblem \eqref{eq_imanpl}to find an inexact solution $\Tilde{z}_{k+1}$ such that 
\begin{equation}\label{eq_inexact_on_tangent}
	\Tilde{z}_{k+1}\in  \mathrm{T}_{z_k}\mathcal{M} + z_k
\end{equation}
using one of the following inexact termination conditions:
\begin{equation}\label{low-high0}
	F_{t_k}(\Tilde{z}_{k+1};z_k) - F_{t_k}(S_{t_k}(z_k);z_k)\leq \left\{\begin{array}{ll} 
		\rho_l\left(F(z_k) - F_{t_k}(\Tilde{z}_{k+1};z_k)\right), \ \rho_l>0, & \mbox{(LACC),}\\
		\frac{\rho_h}{2t_k}\|\Tilde{z}_{k+1} - z_k\|_2^2, \ \rho_h\in (0,1/4), & \mbox{(HACC)}.
	\end{array}\right.
\end{equation}
Here, $S_t(y) := \argmin_{z\in y+\mathrm{T}_y\mathcal{M}} F_t(z;y),$
$\rho_l>0$ and $\rho_h\in (0,1/4)$ are hyperparameters, and we call the first option low accuracy conditions (LACC) and the second option high accuracy conditions (HACC). As we discuss later in Lemma \ref{lemma_sufficiency_lh}, (HACC) implies (LACC) for some specific choices of $\rho_l,\rho_h$. Both options require that $\Tilde{z}_{k+1}\in z_k+\mathrm{T}_{z_k}\mathcal{M}$, i.e., the difference for the update lies in the tangent space. These conditions are motivated by the adaptive conditions in the Euclidean space \cite{zheng2023new,zheng2024adaptive}. For the retraction step \eqref{eq_retraction}, denoting 
$$c_0 :=\begin{cases}
	1+1/(\sqrt{1+\rho_l} + \sqrt{\rho_l})^2, & \mbox{if  
		(LACC) is used},\\
	1+1/\left(\sqrt{1+\rho_h/(1-2\sqrt{\rho_h})} + \sqrt{\rho_h/(1-2\sqrt{\rho_h})}\right)^2, & \mbox{if 
		(HACC) is used},
\end{cases}
$$
we use Armijo backtracking line search and let $\alpha_k$ be the largest value in $\{2^{-s}:s\in\mathbb{N}\}$ such that the following two conditions hold:
\begin{equation}\label{eq_retract_cond1}
	F(z_k) - F(z_{k+1})\geq \frac{c_0\alpha_k}{4t_k}\|z_k - \tilde{z}_{k+1}\|_2^2,
\end{equation}
\begin{equation}\label{eq_retract_cond2}
	\frac{1}{2}\left(F(z_k) + F(z_k + \alpha_k(\Tilde{z}_{k+1} - z_k);z_k)\right)  - F(z_{k+1})\geq 0.
\end{equation}
A prototype of our IManPL algorithm for solving \eqref{problem_single} is described in Algorithm \ref{alg:IManPL-ls}. 
We will discuss the subproblem solver for \eqref{eq_imanpl} and guarantees on reaching \eqref{low-high0}-(LACC) or \eqref{low-high0}-(HACC) later in Section \ref{sec:subsolver_overall}.
\begin{algorithm}[ht]  
	\caption{IManPL --  A Prototype}
	\label{alg:IManPL-ls}
	\begin{algorithmic}
		\STATE \textbf{Input:} Initial point $z_0\in\mathcal{M}$, step sizes  $t_k>0$, parameter $\rho_l>0$ or $\rho_h\in (0,1/4)$, inexact type (IT) $=$ (LACC) or (HACC).
		\FOR{$k = 0, 1, \ldots, $}
		\STATE Find $\tilde{z}_{k+1}$ by solving \eqref{eq_imanpl} such that \eqref{low-high0}-(LACC) holds or \eqref{low-high0}-(HACC) holds, determined by (IT).
		\FOR{$s = 0, 1, \ldots, $}
		\STATE $\alpha_k^{\mbox{tem}} \leftarrow 2^{-s}$, $z_{k+1}^{\mbox{tem}} \leftarrow \Retr_{z_k}(\alpha_k^{\mbox{tem}}(\Tilde{z}_{k+1} - z_k))$.
		\IF{both \eqref{eq_retract_cond1} and \eqref{eq_retract_cond2} hold with $\alpha_k\leftarrow \alpha_k^{\mbox{tem}}$ and $z_{k+1}\leftarrow z_{k+1}^{\mbox{tem}}$}
		\STATE Break.
		\ENDIF
		\STATE $\alpha_k\leftarrow \alpha_k^{\mbox{tem}}$, $z_{k+1}\leftarrow z_{k+1}^{\mbox{tem}}$.
		\ENDFOR
		\ENDFOR
	\end{algorithmic}
\end{algorithm}

Our contributions are summarized below. 
\begin{itemize}
	\item We propose the IManPL algorithm for solving \eqref{problem_single}, which uses adaptive stopping conditions when inexactly solving the subproblem \eqref{low-high0}. To the best of our knowledge, this is the first adaptive algorithm with both low and high accuracy conditions for the nonsmooth manifold composite optimization \eqref{problem_single}. 
	\item Under some mild conditions, we prove that any clustering point of the sequence generated by IManPL is a stationary point, and IManPL finds an $\epsilon-$stationary point in $O(1/\epsilon^2)$ main iterations. This rate matches the $O(1/\epsilon^2)$ rate for ManPL in \cite{wang2022manifold} that assumes the subproblem is solved exactly. It also matches the complexity of the inexact proximal linear (IPL) algorithm in \cite{zheng2023new}, and the proximal linear (PL) method in \cite{drusvyatskiy2019efficiency} in Euclidean space.
	\item We solve \eqref{problem_single} via the accelerated proximal gradient (APG) algorithm  \cite{tseng2008accelerated} for the dual subproblem under the general situation \eqref{problem_single}. When $c(\cdot)$ in \eqref{problem_single} is an identity mapping, we can also use the adaptive regularized semi-smooth Newton's method (ASSN) \cite{xiao2018regularized} for the dual subproblem. Both subproblem solvers are equipped with verifiable stopping conditions that imply \eqref{low-high0}. When solving the subproblems \eqref{eq_imanpl} with APD \cite{tseng2008accelerated}, IManPL can find an $\epsilon-$stationary point with a total $O(1/\epsilon^3)$ iterations in APD for solving all the subproblems \eqref{eq_imanpl}, which gives the first-order oracle complexity. 
	To the best of our knowledge, IManPL achieves the best first-order oracle complexity for solving the nonsmooth manifold composite optimization, and it is also better than $O(\frac{1}{\epsilon^3}\log \frac{1}{\epsilon})$ in \cite{drusvyatskiy2019efficiency} for the Euclidean case. 
\end{itemize}

Table \ref{tab:table_comp_candidate} summarizes the comparison of our IManPL with closely related works.
\begin{table}[ht]
	\centering
	\begin{tabular}{|c|c|c|c|c|c|c|}
		\hline
		Algorithm & $c(\cdot)$ & inexact & adaptive &  total & stationary \\
		\hline
		IRPG \cite{huang2023inexact} & identity & $\checkmark$ & $\checkmark$ & $\times$ & $\checkmark$ \\ 
		\hline
		AManPG \cite{huang2022riemannian2} & identity & $\checkmark$ & $\checkmark$ & $\times$ & $\checkmark$ \\ 
		\hline
		MAL \cite{zhou2023semismooth,zhou2022robust} & general & $\checkmark$ & $\times$ & $\times$ & $\checkmark$ \\ 
		\hline
		RiAL \cite{xu2024oracle} & general & $\checkmark$ & $\times$ & $\checkmark$ & $\times$ \\ 
		\hline
		ManPL \cite{wang2022manifold} & general & $\times$ & $\times$ & $\times$ & $\times$\\ 
		\hline
		\textbf{IManPL (ours)} & general & $\checkmark$ & $\checkmark$ & $\checkmark$ & $\checkmark$ \\ 
		\hline
	\end{tabular}
	\caption{Summary of algorithms for solving \eqref{problem_single}. ``inexact'' indicates whether the algorithm allows the subproblem to be solved inexactly. ``adaptive'' indicates whether the algorithm uses adaptive subproblem stopping conditions. ``total'' indicates whether the number of the total subproblem iterations is analyzed. ``stationary" indicates whether the convergence to a stationary point of $F(z)$ is analyzed.}
	\label{tab:table_comp_candidate}
\end{table}

The rest of this paper is organized as follows. Section \ref{sec:prelimi} introduces preliminaries, notation, and Assumptions. Section \ref{sec:converge_main} provides the convergence analysis in terms of the main iteration. Section \ref{sec:subsolver_overall} provides the subproblem solver and the overall iteration complexity. Section \ref{sec:numerical} provides numerical experiments on the SSC problem. We draw some concluding remarks in Section \ref{sec:conclusion}. 
Appendix \ref{subsec:assn} describes the ASSN algorithm for solving the subproblem when $c(\cdot)$ is the identity mapping. Appendix \ref{sec:numerical_spca} provides numerical experimental results for SPCA.


\section{Preliminaries, Notation, and Assumptions}\label{sec:prelimi}
We begin by introducing the notation and some concepts in Riemannian optimization \cite{absil2008optimization,boumal2023introduction}. 
Let $\langle \,\!\cdot\,, \cdot\rangle$ and $\| \cdot \| $ denote the standard inner product and its induced norm on the Euclidean space $\mathbb{R}^n$, respectively. 
$\mathcal{M}$ is a Riemannian manifold embedded in $\mathbb{R}^n$ and $ \mathrm{T}_{z}\mathcal{M} $ denote the tangent space to $\mathcal{M}$ at $z\in\mathcal{M}$.
Throughout this paper, the Riemannian metric on $\mathcal{M}$ is induced from the standard Euclidean product. 
The Riemannian gradient of the smooth function $f: \mathbb{R}^n \rightarrow \mathbb{R}$ at a point $z \in \mathcal{M}$ is given by
$\rgrad f(z)=\proj_{\mathrm{T}_{z}\mathcal{M}}(\nabla f(z))$, where $\nabla f(z)$ is the Euclidean gradient of $f$ at $z$ and $\proj_{\mathrm{T}_z \mathcal{M}}(\cdot)$ is the Euclidean projection operator onto $\mathrm{T}_z \mathcal{M}$. 
A retraction at $z \in \mathcal{M}$ is a smooth mapping $\mathrm{Retr}_z: \mathrm{T}_z \mathcal{M} \to \mathcal{M}$ satisfying (i) $\mathrm{Retr}_z(\mathbf{0}_z) = z$, where $\mathbf{0}_z$ is the zero element in $\mathrm{T}_z \mathcal{M}$; (ii) $\frac{\mathrm{d}}{\mathrm{d} t} \mathrm{Retr}_z (t v)|_{t = 0} = v$ for all $v \in \mathrm{T}_z \mathcal{M}$. 

Throughout the paper, we assume the following assumptions hold for problem \eqref{problem_single}.
\begin{assumption}\label{ass:problem}
	\begin{enumerate}
		\item[(a)] $f$ is $L_f$-smooth, i.e., 
		$\|\nabla_{z} f(z) - \nabla_{z} f(z')\|_2\leq L_f\|z-z'\|_2,\ \forall z,z{'}\in\mathbb{R}^{n}.$
		\item[(b)] $h$ is convex and $L_{h}$-Lipschitz continuous, i.e., $|h(y) - h(y')|\leq L_{h}\|y - y'\|_2, \ \forall y,y'\in\mathbb{R}^{m}.$
		\item[(c)] The Jacobian of $c$ is $L_{c}$-Lipschitz continuous, i.e., $\|\nabla c(z) - \nabla c(z')\|_2\leq L_c\|z - z'\|_2,\ \forall z,z'\in\mathbb{R}^{n}.$
		\item[(d)] $\mathcal{M}$ is compact. 
	\end{enumerate}
\end{assumption}
Note that Assumption \ref{ass:problem} (d) implies that there exist positive constants $M_1$ and $M_2$ such that 
\begin{equation}\label{retraction_property}
	\|\mathrm{Retr}_{z}(\xi) - z\|_2\leq M_1\|\xi\|_2,\|\mathrm{Retr}(\xi) - (z+\xi)\|_2\leq M_2\|\xi\|_2^2,\forall z\in \mathcal{M},\xi\in \mathrm{T}_z \mathcal{M}.
\end{equation}
See Appendix B of \cite{boumal2019global}.
We also use the following notation: 
\begin{equation}\label{def-L-GF}
	L = L_f + L_hL_c, G_F = \sup_{z\in\mathcal{M}} \|\nabla_{z} f(z)\|_2 + L_h\sup_{z\in\mathcal{M}} \|\nabla c(z)\|_2<\infty.
\end{equation}

Finally, we define the stationary point of for \eqref{problem_single}. 
\begin{defin}\label{stationary_point}
	A point $z\in\mathcal{M}$ is called a stationary point of problem \eqref{problem_single} if it satisfies the following first-order condition:
	\begin{equation}\label{opt-cond}
		0\in \Proj _{\mathrm{T}_{z}\mathcal{M}} \left(\nabla_{z}  f(z) + \nabla c(z)^\top  \partial h(c(z))\right).
	\end{equation}
	We call $z\in\mathcal{M}$ an $\epsilon-$stationary point of \eqref{problem_single} if
	$\|{(z - S_t(z))}/{t}\|_2\leq \epsilon.$
\end{defin}


\section{Convergence Analysis for Main Iterations}\label{sec:converge_main}

In this section, we prove two convergence results of Algorithm \ref{alg:IManPL-ls}. The first one is the iteration complexity of obtaining an $\epsilon$-stationary point, and the second one is the global convergence to a stationary point. We present some technical lemmas first. 

\subsection{Technical Lemmas}
\begin{lemma}[Weak Convexity, Lemma 3.2 in \cite{drusvyatskiy2019efficiency}]\label{lemma:weak_convexity}
	For any $y\in\mathbb{R}^n$, we have
	\begin{equation}\label{weak_rel1}
		|f(y)+\nabla f(y)^\top (z-y) - f(z)|\leq \frac{L_f}{2}\|z-y\|_2^2,\forall z\in\mathbb{R}^n,
	\end{equation}
	\begin{equation}\label{weak_rel2}
		|h(c(y)+\nabla c(y)(z-y)) - h(z)|\leq \frac{L_hL_c}{2}\|z-y\|_2^2,\forall z\in\mathbb{R}^n,
	\end{equation}
	\begin{equation}\label{weak_rel3}
		|F(z;y) - F(z)|\leq \frac{L}{2}\|z - y\|_2^2,\forall z\in\mathbb{R}^n, 
	\end{equation}
	\begin{equation}\label{weak_rel4}
		F_t(z;y) \geq F(z),\forall z\in\mathbb{R}^n, 0<t\leq L^{-1}.
	\end{equation}
\end{lemma}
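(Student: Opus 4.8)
The plan is to establish the four inequalities in order, exploiting the fact that \eqref{weak_rel3} is just the sum of \eqref{weak_rel1} and \eqref{weak_rel2} combined through the triangle inequality, while \eqref{weak_rel4} follows immediately from \eqref{weak_rel3} by the choice of step size. The two foundational estimates \eqref{weak_rel1} and \eqref{weak_rel2} are both instances of the same principle: a Lipschitz-continuous derivative produces a quadratic bound on the linearization error, which I would obtain via the fundamental theorem of calculus. Since $f$ and $c$ are smooth (hence $C^1$) under Assumption \ref{ass:problem}(a),(c), this integral representation is legitimate.

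For \eqref{weak_rel1}, I would write the linearization error as
$$f(z) - f(y) - \nabla f(y)^\top (z-y) = \int_0^1 \bigl(\nabla f(y + s(z-y)) - \nabla f(y)\bigr)^\top (z-y)\, ds,$$
then apply Cauchy--Schwarz inside the integral and the $L_f$-Lipschitz continuity of $\nabla f$, which bounds the integrand by $L_f\, s\,\|z-y\|_2^2$. The factor $s$ integrates to $1/2$, delivering exactly $\frac{L_f}{2}\|z-y\|_2^2$.

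For \eqref{weak_rel2}, the argument is a two-step composition (reading the right-hand side as $h(c(z))$, consistent with $F(z)=f(z)+h(c(z))$ and with the constant $L_hL_c$). First, the $L_h$-Lipschitz continuity of $h$ from Assumption \ref{ass:problem}(b) gives the bound $L_h\,\|c(y)+\nabla c(y)(z-y) - c(z)\|_2$ on the left-hand side. Then I control the inner linearization error of $c$ through the same integral identity as above, now invoking the $L_c$-Lipschitz continuity of the Jacobian $\nabla c$ from Assumption \ref{ass:problem}(c), to get
$$\bigl\|c(z) - c(y) - \nabla c(y)(z-y)\bigr\|_2 \leq \frac{L_c}{2}\|z-y\|_2^2.$$
Multiplying by $L_h$ yields the claimed $\frac{L_hL_c}{2}\|z-y\|_2^2$.

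Finally, \eqref{weak_rel3} follows by splitting $F(z;y)-F(z)$ into the two linearization errors just bounded and applying the triangle inequality with $L=L_f+L_hL_c$. For \eqref{weak_rel4}, I substitute $F_t(z;y)=F(z;y)+\frac{1}{2t}\|z-y\|_2^2$, use the lower bound $F(z;y)\geq F(z)-\frac{L}{2}\|z-y\|_2^2$ implied by \eqref{weak_rel3}, and note that the residual coefficient $\frac{1}{2t}-\frac{L}{2}$ is nonnegative precisely when $0<t\leq L^{-1}$. None of these steps poses a genuine difficulty; the only point demanding care is the composition in \eqref{weak_rel2}, where the Lipschitz constant of $h$ must multiply the \emph{entire} inner error of $c$ so that the product $L_hL_c$ emerges, rather than applying a gradient-type bound to the composite map directly.
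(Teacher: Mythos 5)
Your proof is correct and follows essentially the same route as the paper's source: the paper proves this lemma only by citation to Lemma 3.2 of \cite{drusvyatskiy2019efficiency}, whose argument is exactly your standard one (fundamental-theorem-of-calculus bounds on the linearization errors of $f$ and $c$ under Lipschitz derivatives, composition with the $L_h$-Lipschitz $h$, triangle inequality for \eqref{weak_rel3}, and the condition $t\leq L^{-1}$ for \eqref{weak_rel4}). You also correctly read the evident typo in \eqref{weak_rel2}, where $h(z)$ should be $h(c(z))$, which is the interpretation consistent with $F(z)=f(z)+h(c(z))$ and with the constant $L_hL_c$.
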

\begin{lemma}\label{strong_convex_Ft}
	For any $z\in\mathcal{M},\tilde{z}\in \mathrm{T}_z\mathcal{M} + z$ and $t>0$, we have
	$$F_t(\tilde{z};z) - F_t(S_t(z);z)\geq \frac{1}{2t}\|\tilde{z}-S_t(z)\|_2^2.$$
\end{lemma}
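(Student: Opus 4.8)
The plan is to exploit the strong convexity of the map $z \mapsto F_t(z;y)$ restricted to the affine tangent space $\mathrm{T}_y\mathcal{M} + y$, combined with the fact that $S_t(y)$ is by definition the exact minimizer of this map over that affine space. First I would observe that $F_t(z;y) = F(z;y) + \frac{1}{2t}\|z-y\|_2^2$ where $F(z;y) = f(y) + \nabla f(y)^\top(z-y) + h(c(y) + \nabla c(y)(z-y))$ is a convex function of $z$: indeed $f(y) + \nabla f(y)^\top(z-y)$ is affine in $z$, the inner argument $c(y) + \nabla c(y)(z-y)$ is affine in $z$, and $h$ is convex by Assumption \ref{ass:problem}(b), so the composition $h(c(y) + \nabla c(y)(z-y))$ is convex in $z$. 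Adding the quadratic $\frac{1}{2t}\|z-y\|_2^2$ makes $F_t(\cdot;y)$ strongly convex with modulus $1/t$.

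The key step is the standard strong-convexity inequality for a minimizer over a convex set. Since $\mathrm{T}_y\mathcal{M} + y$ is an affine (hence convex) subset of $\mathbb{R}^n$ and $S_t(y)$ minimizes the $(1/t)$-strongly convex function $F_t(\cdot;y)$ over it, I would invoke the characterization that for any $z$ in the constraint set,
$$F_t(z;y) \geq F_t(S_t(y);y) + \frac{1}{2t}\|z - S_t(y)\|_2^2.$$
This follows because a strongly convex function $g$ with modulus $\mu$ over a convex set $C$, minimized at $x^\star$, satisfies $g(x) \geq g(x^\star) + \frac{\mu}{2}\|x - x^\star\|_2^2$ for all $x \in C$; the linear term $\langle \partial g(x^\star), x - x^\star\rangle$ is nonnegative by the first-order optimality condition (the subgradient lies in the normal cone of $C$ at $x^\star$), and the quadratic term comes directly from the strong-convexity definition. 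Rearranging gives exactly the claimed bound.

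I do not anticipate a genuine obstacle here, since the result is essentially a restatement of the defining property of the proximal subproblem minimizer; the only care needed is in handling the affine constraint $\mathrm{T}_y\mathcal{M} + y$ correctly. The point to be careful about is that the strong-convexity modulus is governed solely by the quadratic proximal term $\frac{1}{2t}\|z-y\|_2^2$, and crucially \emph{not} by any curvature of $f$ or $h$, since those contributions to $F(\cdot;y)$ are already linearized and therefore merely convex, not strongly convex. Thus the modulus is exactly $1/t$ for every $t>0$, with no restriction such as $t \leq L^{-1}$ needed (in contrast to \eqref{weak_rel4} in Lemma \ref{lemma:weak_convexity}). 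This is why the conclusion holds for all $t>0$ rather than only for small step sizes.
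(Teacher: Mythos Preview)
Your proposal is correct and matches the paper's approach exactly: the paper's proof is the single sentence ``This holds from the $1/t$-strong convexity of $F_t(\cdot;y)$ on $\mathrm{T}_y\mathcal{M} + y$,'' and you have simply unpacked that sentence carefully. Your additional remarks about why the modulus is exactly $1/t$ (independent of $L$) and why no restriction on $t$ is needed are accurate and helpful elaborations.
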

\begin{proof}{Proof of Lemma \ref{strong_convex_Ft}}
	This holds from the $1/t-$strong convexity of $F_t(\cdot;z)$ on $\mathrm{T}_z\mathcal{M} + z$.
\end{proof}

Next, we discuss the inexactness of solving \eqref{eq_imanpl}. 
For any $z\in\mathcal{M},\Tilde{z}\in z + \mathrm{T}_z\mathcal{M}$ and $t>0$, we use 
$$\varepsilon_{t}(\Tilde{z};z) = F_{t}(\Tilde{z};z) - F_{t}(S_{t}(z);z)$$
to measure the subproblem accuracy. 
\begin{lemma}\label{lemma_relationship_quantities}
	For any $z\in\mathcal{M},\Tilde{z}\in z + \mathrm{T}_z\mathcal{M}$, $t>0$ and $\rho\in (0,1)$, the following inequalities hold:
	\begin{equation}\label{rel1}
		\frac{1}{2t}\|z - \Tilde{z}\|_2^2\leq \frac{1}{2t}\frac{\|z - S_{t}(z)\|_2^2}{\rho} + \frac{\varepsilon_{t}(\Tilde{z};z)}{1-\rho},
	\end{equation}
	\begin{equation}\label{rel2}
		\frac{1}{2t}\|z - S_{t}(z)\|_2^2\leq \frac{1}{2t}\frac{\|z - \Tilde{z}\|_2^2}{\rho} + \frac{\varepsilon_{t}(\Tilde{z};z)}{1-\rho}.
	\end{equation}
\end{lemma}
\begin{proof}{Proof of Lemma \ref{lemma_relationship_quantities}}
	These two relationships follow from Cauchy-Schwartz inequality and the fact that $\varepsilon_{t}(\Tilde{z};z)\geq \frac{1}{2t}\|\Tilde{z} - S_{t}(z)\|_2^2$ that follows from Lemma \ref{strong_convex_Ft}.
\end{proof}

Next, we show that \eqref{low-high0}-(HACC) implies \eqref{low-high0}-(LACC) for some specific choices of $\rho_l,\rho_h$.
\begin{lemma}\label{lemma_sufficiency_lh}
	For any $z\in\mathcal{M}$, $\Tilde{z}\in \mathrm{T}_{z}\mathcal{M} + z$, and $t>0$, if 
	\begin{equation}\label{high_temp}
		F_{t}(\Tilde{z};z) - F_{t}(S_{t}(z);z)\leq \frac{\rho_h}{2t}\|\Tilde{z} - z\|_2^2, \quad \rho_h\in (0,1/4),
	\end{equation}
	then we have
	$$F_{t}(\Tilde{z};z) - F_{t}(S_{t}(z);z)\leq \rho_l\left(F(z) - F_{t}(\Tilde{z};z)\right), \quad \rho_l = \frac{\rho_h}{1-2\sqrt{\rho_h}}.$$
\end{lemma}
\begin{proof}{Proof of Lemma \ref{lemma_sufficiency_lh}}
	We have
	$F_t(z;z) - F_t(S_t(z);z) = F_{t}(z;z) - F_{t}(\Tilde{z};z) + \varepsilon_{t}(\Tilde{z};z)\geq \frac{1}{2t}\|z-S_{t}(z)\|_2^2\geq \frac{\rho}{2t}\|\Tilde{z}-z\|_2^2-\frac{\rho}{1-\rho}\varepsilon_{t}(\Tilde{z};z)$ with $\rho = 1-\sqrt{\rho_h}$. Here, the first inequality is from Lemma \ref{strong_convex_Ft}, and the second one is from \eqref{rel1} of Lemma \ref{lemma_relationship_quantities}. Thus, $F_{t}(z;z) - F_{t}(\Tilde{z};z) + \frac{1}{1-\rho}\varepsilon_{t}(\Tilde{z};z)\geq \frac{\rho}{2t}\|\Tilde{z}-z\|_2^2$.
	Applying \eqref{high_temp} to $\varepsilon_{t}(\Tilde{z};z)$, we get $F_{t}(z;z) - F_{t}(\Tilde{z};z)\geq \frac{1-2\sqrt{\rho_h}}{\rho_h}\frac{\rho_h}{2t}\|z - \Tilde{z}\|_2^2$. Applying \eqref{high_temp} again to $\frac{\rho_h}{2t}\|z - \Tilde{z}\|_2^2$, we have
	\begin{equation}\label{sufficient_low_high}
		\varepsilon_{t}(\Tilde{z};z)\leq \frac{\rho_h}{1-2\sqrt{\rho_h}}\left(F_{t}(z;z) - F_{t}(\Tilde{z};z)\right).
	\end{equation}
	This finishes the proof.
\end{proof}
Let $(z,\tilde{z},t)$ in Lemma \ref{lemma_sufficiency_lh} be $(z_k,\tilde{z}_{k+1},t_k)$ in Algorithm \ref{alg:IManPL-ls}, we know that \eqref{low-high0}-(HACC) implies \eqref{low-high0}-(LACC) for some specific choices of $\rho_l,\rho_h$.

\subsection{Iteration Complexity of Obtaining an $\epsilon$-stationary point}\label{sec:complexity}
For Algorithm \ref{alg:IManPL-ls}, we can prove the following lemma for sufficient decrease of the objective function.
\begin{lemma}[Sufficient Decrease]\label{lemma_descent_ls}
		
	\begin{itemize}
		\item[(a)] When \eqref{low-high0}-(LACC) holds, we have
		$$F(z_k) - F(z_{k+1}) \geq c_{k,l}\|z_k - S_{t_k}(z_k)\|_2^2.$$
		
		\item[(b)] When \eqref{low-high0}-(HACC) holds, we have
		$$F(z_k) - F(z_{k+1}) \geq c_{k,h}\|z_k - S_{t_k}(z_k)\|_2^2, \quad .$$
	\end{itemize}
	Here $c_{k,l},c_{k,h}$ are positive scalars that will be specified in the proof. Specifically, $c_{k,l}$ depends on $t_k$ and $\rho_l$, and $c_{k,h}$ depends on $t_k$ and $\rho_h$.
\end{lemma}

\begin{proof}{Proof of Lemma \ref{lemma_descent_ls}}
	We first provide the proof based on \eqref{low-high0}-(LACC). For $\hat{\alpha}_k\in [0,1]$, we denote 
	$$\hat{z}_{k+1} = z_k + \hat{\alpha}_k(\Tilde{z}_{k+1} - z_k), \quad \overline{z}_{k+1} = \mathrm{Retr}_{z_k}(\hat{z}_{k+1} - z_k), \quad \tau = (\max\{M_1^2,1\}L)^{-1}.$$
	Lemma \ref{strong_convex_Ft} indicates that
	$$F_{t_k}(z_k;z_k) - F_{t_k}(\Tilde{z}_{k+1};z_k) +\varepsilon_{t_k}(\Tilde{z}_{k+1};z_k)\geq \frac{1}{2t_k}\|z_k - S_{t_k}(z_k)\|_2^2.$$
	Combined with \eqref{low-high0}-(LACC), we have
	\begin{equation}\label{eq0_proof_sufficient_decrease}
		F_{t_k}(z_k;z_k) - F_{t_k}(\Tilde{z}_{k+1};z_k) \geq \frac{1}{2{t_k}(1+\rho_l)}\|z_k - S_{t_k}(z_k)\|_2^2.
	\end{equation}
	This indicates that $F(z_k;z_k) - F(\Tilde{z}_{k+1};z_k)\geq \frac{1}{2{t_k}(1+\rho_l)}\|z_k - S_{t_k}(z_k)\|_2^2$. From the convexity of $F(\cdot;z_k)$, we also have
	\begin{equation}\label{eq0_proof_descent_ls}
		F(z_k;z_k) - F(\hat{z}_{k+1};z_k)\geq \frac{\hat{\alpha}_k}{2{t_k}(1+\rho_l)}\|z_k - S_{t_k}(z_k)\|_2^2.
	\end{equation}
	In addition, by \eqref{eq0_proof_sufficient_decrease} and \eqref{rel1} in Lemma \ref{lemma_relationship_quantities} with $(z,\tilde{z},t) = (z_k,\tilde
	{z}_{k+1},t_k)$ and $\rho\in (0,1)$ such that $(1-\rho)/\rho = \sqrt{\rho_l/(1+\rho_l)}$, we have that
	\begin{equation}\label{temp_proof_suffdecrease}
		F_{t_k}(z_k;z_k) - F_{t_k}(\Tilde{z}_{k+1};z_k)\geq \frac{\rho}{2{t_k}(1+\rho_l)}\|z_k - \tilde{z}_{k+1}\|_2^2 - \frac{\rho}{(1-\rho)(1+\rho_l)}\varepsilon_{t_k}(\Tilde{z}_{k+1};z_k),
	\end{equation}
	which further yields
	\begin{align*}
		&\frac{1-\rho+\rho_l}{(1-\rho)(1+\rho_l)}\left(F_{t_k}(z_k;z_k) - F_{t_k}(\Tilde{z}_{k+1};z_k)\right) \\
		= & \left(1+\frac{\rho\rho_l}{(1-\rho)(1+\rho_l)}\right)\left(F_{t_k}(z_k;z_k) - F_{t_k}(\Tilde{z}_{k+1};z_k)\right)\\
		\geq & F_{t_k}(z_k;z_k) - F_{t_k}(\Tilde{z}_{k+1};z_k) + \frac{\rho}{(1-\rho)(1+\rho_l)}\varepsilon_{t_k}(\Tilde{z}_{k+1};z_k) \\
		\geq & \frac{\rho}{2{t_k}(1+\rho_l)}\|z_k - \tilde{z}_{k+1}\|_2^2,
	\end{align*} 
	where the first inequality follows from \eqref{low-high0}-(LACC), and the second inequality follows from \eqref{temp_proof_suffdecrease}. 
	Thus,
	$$F_{t_k}(z_k;z_k) - F_{t_k}(\Tilde{z}_{k+1};z_k)\geq \frac{\rho}{2{t_k}(1+\frac{\rho_l}{1-\rho})}\|z_k - \Tilde{z}_{k+1}\|_2^2 = \frac{1}{2{t_k}(\sqrt{1+\rho_l} + \sqrt{\rho_l})^2}\|z_k - \Tilde{z}_{k+1}\|_2^2,$$
	which also indicates that
	$$F(z_k;z_k) - F(\Tilde{z}_{k+1};z_k)\geq \left(\frac{1}{2{t_k}} + \frac{1}{2{t_k}(\sqrt{1+\rho_l} + \sqrt{\rho_l})^2}\right)\|z_k - \Tilde{z}_{k+1}\|_2^2.$$
	This inequality, together with the convexity of $F(\cdot;z_k)$, further yields, 	\begin{equation}\label{eqneg1_proof_sufficient_decrease}
		F(z_k;z_k) - F(\hat{z}_{k+1};z_k)\geq \left(\frac{\hat{\alpha}_k}{2{t_k}} + \frac{\hat{\alpha}_k}{2{t_k}(\sqrt{1+\rho_l} + \sqrt{\rho_l})^2}\right)\|z_k - \Tilde{z}_{k+1}\|_2^2,
	\end{equation}
	which indicates that
	$$F(z_k) - F_{\tau}(\hat{z}_{k+1};z_k)\geq \left(\frac{\hat{\alpha}_k}{2{t_k}} + \frac{\hat{\alpha}_k}{2{t_k}(\sqrt{1+\rho_l} + \sqrt{\rho_l})^2} - \frac{\hat{\alpha}_k^2}{2\tau}\right)\|z_k - \Tilde{z}_{k+1}\|_2^2.$$
	
	Next, we establish a lower bound for $F_\tau(\hat{z}_{k+1};z_k) - F(\overline{z}_{k+1})$. We know that 
	\begin{align}\label{eq_decomp}
		& F_\tau(\hat{z}_{k+1};z_k) - F(\overline{z}_{k+1}) \\ = & 
		f(z_k) + \nabla f(z_k)^\top (\hat{z}_{k+1} - z_k)-f(\overline{z}_{k+1})+ h(c(z_{k}) + \nabla c(z_k)(\hat{z}_{k+1} - z_k)) - h(c(\overline{z}_{k+1})) \nonumber\\ & + \frac{1}{2\tau}\|z_k - \hat{z}_{k+1}\|_2^2.\nonumber
	\end{align}
	By \eqref{weak_rel1} in Lemma \ref{lemma:weak_convexity},
	$$f(z_k) - f(\overline{z}_{k+1})\geq \nabla f(z_k)^\top (z_k - \overline{z}_{k+1}) - \frac{L_f}{2}\|z_k - \overline{z}_{k+1}\|_2^2,$$
	which indicates that
	\begin{equation}\label{eq1_proof_sufficient_decrease}
		f(z_k) + \nabla f(z_k)^\top (\hat{z}_{k+1} - z_k)-f(\overline{z}_{k+1})\geq -\left(\sup_{z\in\mathcal{M}}\|\nabla f(z)\|_2\right)\|\hat{z}_{k+1} - \overline{z}_{k+1}\|_2 - \frac{L_f}{2}\|\overline{z}_{k+1} - z_k\|_2^2.
	\end{equation}
	Moreover, 
	\begin{align*}
		& h(c(z_{k}) + \nabla c(z_k)(\hat{z}_{k+1} - z_k)) - h(c(\overline{z}_{k+1})) \\
		= & h(c(z_{k}) + \nabla c(z_k)(\hat{z}_{k+1} - z_k)) - h(c(z_{k})+ \nabla c(z_k)(\overline{z}_{k+1} - z_k))
		\\
		& + h(c(z_{k}) + \nabla c(z_k)(\overline{z}_{k+1} - z_k)) - h(c(\overline{z}_{k+1})).
	\end{align*}
	By Assumption \ref{ass:problem} (b) and (c), we have
	\begin{align*}
		& h(c(z_{k}) + \nabla c(z_k)(\hat{z}_{k+1} - z_k)) - h(c(z_{k})+ \nabla c(z_k)(\overline{z}_{k+1} - z_k)) \\ \geq & -L_h\|\nabla c(z_k) (\hat{z}_{k+1} - \overline{z}_{k+1})\|_2 \\ 
		\geq & -L_h(\sup_{z\in\mathcal{M}} \|\nabla c(z)\|_2)\|\overline{z}_{k+1} - \hat{z}_{k+1}\|_2. 
	\end{align*}
	By \eqref{weak_rel2} in Lemma \ref{lemma:weak_convexity},
	$h(c(z_{k}) + \nabla c(z_k)(\overline{z}_{k+1} - z_k)) - h(c(\overline{z}_{k+1}))\geq -\frac{L_hL_c}{2}\|z_k - \overline{z}_{k+1}\|_2^2.$ Thus,
	\begin{align}
		& h(c(z_{k}) + \nabla c(z_k)(\hat{z}_{k+1} - z_k)) - h(c(\overline{z}_{k+1})) \nonumber \\
		\geq & -\frac{L_hL_c}{2}\|z_k - \overline{z}_{k+1}\|_2^2-L_h(\sup_{z\in\mathcal{M}} \|\nabla c(z)\|_2)\|\overline{z}_{k+1} - \hat{z}_{k+1}\|_2. \label{eq2_proof_sufficient_decrease}
	\end{align}
	By \eqref{eq_decomp}, \eqref{eq1_proof_sufficient_decrease} and \eqref{eq2_proof_sufficient_decrease}, we have (note that $L$ and $G_F$ are defined in \eqref{def-L-GF}):
	$$F_\tau(\hat{z}_{k+1};z_k) - F(\overline{z}_{k+1})\geq
	\frac{1}{2\tau}\|\hat{z}_{k+1} - z_k\|_2^2- \frac{L}{2}\|z_k - \overline{z}_{k+1}\|_2^2- G_F\|\overline{z}_{k+1} - \hat{z}_{k+1}\|_2.$$
	Noticing that $\frac{1}{2\tau}\|\hat{z}_{k+1} - z_k\|_F^2- \frac{L}{2}\|z_k - \overline{z}_{k+1}\|_2^2\geq 0$ 
	and 
	$G_F\|\overline{z}_{k+1} - \hat{z}_{k+1}\|_2\leq G_FM_2\|z_k - \hat{z}_{k+1}\|_2^2$
	from \eqref{retraction_property}, we further have
	$$F_\tau(\hat{z}_{k+1};z_k) - F(\overline{z}_{k+1})\geq 
	-G_FM_2\|z_k - \hat{z}_{k+1}\|_2^2 = 
	-\hat{\alpha}_k^2G_FM_2\|\Tilde{z}_{k+1} - z_k\|_2^2.$$
	Thus,
	\begin{equation}\label{temp_eq_alpha_k_sq}
		F(\hat{z}_{k+1};z_k) - F(\overline{z}_{k+1})\geq -\hat{\alpha}_k^2(G_FM_2 + 1/(2\tau))\|\Tilde{z}_{k+1} - z_k\|_2^2.
	\end{equation}
	Together with \eqref{eqneg1_proof_sufficient_decrease}, we have
	\begin{equation}\label{eqneg1_proof_temp1}
		F(z_k) - F(\overline{z}_{k+1})\geq \frac{c_1\hat{\alpha}_k - c_{2,k}\hat{\alpha}_k^2}{2t_k}\|z_k - \Tilde{z}_{k+1}\|_2^2,
	\end{equation}
	in which 
	$$c_1 = 1 + \frac{1}{(\sqrt{1+\rho_l} + \sqrt{\rho_l})^2}, \quad c_{2,k} = t_k/\tau + 2t_kG_FM_2.$$
	Combining \eqref{eqneg1_proof_sufficient_decrease} and \eqref{temp_eq_alpha_k_sq} yields
	\begin{align}\label{eqneg1_proof_temp2} & \frac{1}{2}\left(F(z_k) + F(\hat{z}_{k+1};z_k)\right)  - F(\overline{z}_{k+1}) \\ = & \frac{1}{2}\left(F(z_k) - F(\hat{z}_{k+1};z_k)\right) + (F(\hat{z}_{k+1};z_k) - F(\overline{z}_{k+1})) \nonumber\\
		\geq & \frac{c_1\hat{\alpha}_k/2 - c_{2,k}\hat{\alpha}_k^2}{2t_k}\|z_k - \Tilde{z}_{k+1}\|_2^2.\nonumber
	\end{align}
	Combining \eqref{eqneg1_proof_temp1} and \eqref{eqneg1_proof_temp2} we know that, 
	for any $\hat{\alpha}_k\in [0,c_{3,k}]$ with $c_{3,k} = \min\{1,c_1/(2c_{2,k})\}$, the following two inequalities hold 
	\begin{equation}\label{eq1_term_cond1}
		F(z_k) - F(\overline{z}_{k+1})\geq \frac{c_1\hat{\alpha}_k}{4t_k}\|z_k - \Tilde{z}_{k+1}\|_2^2,
	\end{equation}
	\begin{equation}\label{eq2_term_cond}
		\frac{1}{2}\left(F(z_k) + F(\hat{z}_{k+1};z_k)\right)  - F(\overline{z}_{k+1})\geq 0.
	\end{equation}
	\eqref{eq1_term_cond1} and \eqref{eq2_term_cond} indicate that there must exist $\alpha_k\in [c_{3,k}/2,1]$ such that the two line search conditions \eqref{eq_retract_cond1} and \eqref{eq_retract_cond2} are satisfied 
	with $z_{k+1} = \Retr_{z_k}(\alpha_k(\tilde{z}_{k+1}-z_k))$. Thus,
	\begin{align*}
		& F(z_k) - F(z_{k+1}) \\
		= & \frac{1}{2}\left(F(z_k) - F(z_k + \alpha_k(\Tilde{z}_{k+1} - z_k);z_k)\right) + \frac{1}{2}\left(F(z_k) + F(z_k + \alpha_k(\Tilde{z}_{k+1} - z_k);z_k)\right) - F(z_{k+1}) \\
		\geq & \frac{1}{2}\left(F(z_k) - F(z_k + \alpha_k(\Tilde{z}_{k+1} - z_k);z_k)\right) \\
		\geq & \frac{c_{3,k}}{4t_k(1+\rho_l)}\|z_k - S_{t_k}(z_k)\|_2^2,
	\end{align*}
	where, the first inequality is from \eqref{eq_retract_cond2}, and the second inequality is from \eqref{eq0_proof_descent_ls} with $\hat{\alpha}_k = \alpha_k\geq c_{3,k}/2$. This finishes the proof for part (a) by letting $c_{k,l} = \frac{c_{3,k}}{4t_k(1+\rho_l)}$.
	
	For the proof of (b), based on Lemma \ref{lemma_sufficiency_lh}, we only need to replace $\rho_l$ with $\rho' := \rho_h/(1-2\sqrt{\rho_h})$. More specifically, the same proof goes through by replacing $c_1$ with $c_1' = 1+ 1/(\sqrt{1+\rho'}+\sqrt{\rho'})^2$ and replacing $c_{3,k}$ with $c_{3,k}' = \min\{1,c_1'/(2c_{2,k})\}$. The desired result follows by letting $c_{k,h} = c_{3,k}'/(4t_k(1+\rho'))$.
\end{proof}

Now we are ready to provide the main iteration complexity of finding an $\epsilon-$stationary point.
\begin{theorem}\label{thm_main_iteration_complexity}
	Algorithm \ref{alg:IManPL-ls} with $t_k = t,\forall k\in\mathbb{N}$ finds an $\epsilon-$stationary point in
	$\left\lfloor \frac{F_0 - F_\star}{\beta_0\epsilon^2}\right\rfloor$ iterations.  
	Here, $F_0 = F(z_0), F_\star = \inf_{z \in \mathcal{M}} F(z)$, $\beta_0 = t^2c_l$ if $IT=LACC$ and $\beta_0 = t^2c_h$ if $IT=HACC$. Here, under the fixed step size $t$, $c_l := c_{k,l},c_h:= c_{k,h}, \forall k\in\mathbb{N}$.
\end{theorem}
\begin{proof}{Proof of Theorem \ref{thm_main_iteration_complexity}}
	Assume that in the first $K$ iterations of Algorithm \ref{alg:IManPL-ls}, an $\epsilon$-stationary point is not found. From Lemma \ref{lemma_descent_ls}, we know that
	$$F(z_k) - F(z_{k+1})\geq \beta_0\epsilon^2, \quad k=0,1,\ldots, K-1.$$
	Summing this inequality for $k=0,1,\ldots, K-1$ yields
	$$F_0 - F_\star\geq \sum_{k=0}^{K - 1} ( F(z_k) - F(z_{k+1}) )\geq \beta_0 K \epsilon^2.$$
	This shows that $K$ must be smaller than $\frac{F_0 - F_\star}{\beta_0\epsilon^2}$, and completes the proof.
\end{proof}

\subsection{Global Convergence to a Stationary Point}
In this subsection, we prove that $\{z_k\}_{k=0}^\infty$ generated by Algorithm \ref{alg:IManPL-ls} converges to stationary point of \eqref{problem_single}.

\begin{theorem}\label{thm_converge_to_stationary}
	For Algorithm \ref{alg:IManPL-ls} with $t_k = t>0, \forall k\in\mathbb{N}$ and IT being either (LACC) or (HACC), any accumulation point of $\{z_k\}_{k=0}^\infty$ is a stationary point of \eqref{problem_single}.
\end{theorem}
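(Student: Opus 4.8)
The plan is to combine the sufficient decrease of Lemma \ref{lemma_descent_ls} with a continuity property of the exact subproblem solution map $S_t(\cdot)$. First I would record existence of accumulation points: every iterate $z_k$ lies in $\mathcal{M}$, which is compact by Assumption \ref{ass:problem}(d), so $\{z_k\}$ is bounded and Bolzano--Weierstrass yields a convergent subsequence. Moreover $F$ is continuous on the compact set $\mathcal{M}$ and hence bounded below, so $F_\star > -\infty$.

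Next I would show the residual $\|z_k - S_t(z_k)\|_2$ tends to zero. By Lemma \ref{lemma_descent_ls} there is a constant $C>0$ (equal to $c_l/(4t(1+\rho_l))$ under (LACC) or $c_h/(4t(1+\rho'))$ under (HACC)) with $F(z_k) - F(z_{k+1}) \geq C\|z_k - S_t(z_k)\|_2^2 \geq 0$ for every $k$. Thus $\{F(z_k)\}$ is non-increasing and bounded below, hence convergent, and summing the telescoping inequality gives $\sum_{k=0}^\infty \|z_k - S_t(z_k)\|_2^2 \leq (F_0 - F_\star)/C < \infty$. In particular $\|z_k - S_t(z_k)\|_2 \to 0$.

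The crux is to prove that $S_t : \mathcal{M} \to \mathbb{R}^n$ is continuous, and this is the step I expect to be the main obstacle, because as $y$ varies both the data $\nabla f(y), c(y), \nabla c(y)$ and, more delicately, the feasible affine subspace $\mathrm{T}_y\mathcal{M} + y$ change. I would fix $y^* \in \mathcal{M}$, take $y_j \to y^*$, and argue as follows. Since $F_t(S_t(y_j); y_j) \leq F_t(y_j; y_j) = F(y_j)$, combining the $1/t$-strong convexity of Lemma \ref{strong_convex_Ft} with the uniform bound $G_F < \infty$ on the linear part gives $\|S_t(y_j) - y_j\|_2 \leq 2tG_F$, so $\{S_t(y_j)\}$ is bounded; let $w$ be the limit of any convergent subsequence $S_t(y_{j_l})$. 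Writing $v_{j_l} = S_t(y_{j_l}) - y_{j_l} \in \mathrm{T}_{y_{j_l}}\mathcal{M}$, i.e. $\proj_{\mathrm{T}_{y_{j_l}}\mathcal{M}} v_{j_l} = v_{j_l}$, and invoking the continuity in $y$ of the orthogonal projection onto $\mathrm{T}_y\mathcal{M}$ (a consequence of $\mathcal{M}$ being a smooth embedded manifold), I pass to the limit to obtain $\proj_{\mathrm{T}_{y^*}\mathcal{M}}(w - y^*) = w - y^*$, so $w \in \mathrm{T}_{y^*}\mathcal{M} + y^*$ is feasible for the $y^*$-subproblem. To see $w$ is optimal, take any feasible $z \in \mathrm{T}_{y^*}\mathcal{M} + y^*$ and set $z_{j_l} = y_{j_l} + \proj_{\mathrm{T}_{y_{j_l}}\mathcal{M}}(z - y^*)$, which is feasible for the $y_{j_l}$-subproblem and satisfies $z_{j_l} \to z$. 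Optimality of $S_t(y_{j_l})$ gives $F_t(S_t(y_{j_l}); y_{j_l}) \leq F_t(z_{j_l}; y_{j_l})$, and letting $l \to \infty$ using the joint continuity of $(z,y) \mapsto F_t(z;y)$ (guaranteed by smoothness of $f, c$ and continuity of $h$) yields $F_t(w; y^*) \leq F_t(z; y^*)$. Since $z$ was arbitrary and the minimizer is unique by strong convexity, $w = S_t(y^*)$. As every convergent subsequence of the bounded sequence $\{S_t(y_j)\}$ shares the same limit $S_t(y^*)$, the whole sequence converges, establishing continuity of $S_t$ and hence of the residual map $y \mapsto \|y - S_t(y)\|_2$.

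Finally I would conclude. Let $z^*$ be any accumulation point, with $z_{k_j} \to z^*$. By continuity of the residual and the fact that $\|z_k - S_t(z_k)\|_2 \to 0$, we get $\|z^* - S_t(z^*)\|_2 = \lim_j \|z_{k_j} - S_t(z_{k_j})\|_2 = 0$, so $z^* = S_t(z^*)$. Lemma B.5 in \cite{wang2022manifold} then shows that $z^*$ is a stationary point of \eqref{problem_single}, completing the proof.
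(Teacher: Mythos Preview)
Your argument is correct, but it follows a different path from the paper's proof. Both proofs share the opening moves (compactness of $\mathcal{M}$ gives accumulation points; Lemma \ref{lemma_descent_ls} plus telescoping gives $\|z_k - S_t(z_k)\|_2 \to 0$). From there the routes diverge. You prove directly that the solution map $y \mapsto S_t(y)$ is continuous on $\mathcal{M}$, pass to the limit along a subsequence to obtain $z^* = S_t(z^*)$, and then invoke Lemma B.5 of \cite{wang2022manifold} as a black box. The paper instead switches to the Moreau-envelope viewpoint: it introduces the proximal-point iterate $\tilde S_{t'}(z_k) = \argmin_{z \in \mathrm{T}_{z_k}\mathcal{M}+z_k} F(z) + \tfrac{1}{2t'}\|z-z_k\|_2^2$ with $t' = t/(1+tL)$, uses results from \cite{drusvyatskiy2019efficiency} to transfer the vanishing residual from $S_t$ to $\tilde S_{t'}$, reads off from the KKT conditions that $\tfrac{1}{t'}(z_k - \tilde S_{t'}(z_k)) + \xi_k \in \partial F(\tilde S_{t'}(z_k))$ for some $\xi_k \in [\mathrm{T}_{z_k}\mathcal{M}]^\perp$, and finally passes to the limit using the closedness of $\partial F$ and the continuity of the tangent-space projection to conclude $0 \in \Proj_{\mathrm{T}_{z_\star}\mathcal{M}}(\partial F(z_\star))$ directly.

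Your approach is arguably more elementary and self-contained: it avoids the Moreau-envelope machinery and the equivalence results of \cite{drusvyatskiy2019efficiency}, relying only on continuity of the subproblem solution map, which you establish cleanly via a parametric-optimization argument. The paper's route, by contrast, never needs to prove continuity of $S_t$ and does not appeal to Lemma B.5 of \cite{wang2022manifold}; it produces the stationarity condition explicitly through the limiting subdifferential, which may be preferable if one later wants quantitative information about the near-stationarity of iterates. Both arguments lean on the smooth dependence of $\Proj_{\mathrm{T}_y\mathcal{M}}$ on $y$.
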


\begin{proof}{Proof of Theorem \ref{thm_converge_to_stationary}}
	First, accumulation points of $\{z_k\}_{k=0}^\infty$ exist because of the compactness of $\mathcal{M}$ in Assumption \ref{ass:problem}(d).
	We will use the notation $F_0,F_\star$ and $\beta_0$ defined in Theorem \ref{thm_main_iteration_complexity}. 
	Following the idea of \cite{drusvyatskiy2019efficiency}, we denote
	\begin{equation}\label{global-conv-def-tilde-S}
		\tilde{S}_{t'}(z_k) = \argmin_{z\in \mathrm{T}_{z_k}\mathcal{M} + z_k} F(z) + \frac{1}{2t'}\|z - z_k\|_2^2, \quad t' = t/(1+tL).
	\end{equation}

	The Lipschitz continuity in Assumption \ref{ass:problem}(a)(b)(c) and the compactness in Assumption \ref{ass:problem}(d) imply that $F_\star > -\infty$. Under the fixed step size $t$, by Lemma \ref{lemma_descent_ls}, we have 
	$F(z_k) - F(z_{k+1})\geq \beta_0\|z_k - S_t(z_k)\|_2^2/t^2.$ Thus, $\beta_0\sum_{k=0}^\infty\|z_k - S_t(z_k)\|_2^2/t^2\leq \sum_{k=0}^\infty \left(F(z_k) - F(z_{k+1})\right)\leq F_0 - F_\star<\infty$, which further indicates $\lim_{k\rightarrow \infty}\|z_k - S_t(z_k)\|_2 = 0$. Lemma 4.3 and Theorem 4.5 in \cite{drusvyatskiy2019efficiency} indicate that
	\begin{equation}\label{eq_related_pp_pl}
		\left\|\frac{z_k - S_t(z_k)}{t}\right\|_2 = \Theta\left(\left\|\frac{z_k - \tilde{S}_{t'}(z_k)}{t'}\right\|_2\right),\forall k\in\mathbb{N},
	\end{equation}
	where $\Theta$ hides positive constant factors related to $t$ and $L$. Since $\lim_{k\rightarrow \infty} \left\|\frac{z_k - S_t(z_k)}{t}\right\|_2 = 0$, we have that
	\begin{equation}\label{eq_lim0_moreau}
		\lim_{k\rightarrow \infty} \left\|\frac{z_k - \tilde{S}_{t'}(z_k)}{t'}\right\|_2 = 0.
	\end{equation}
	According to Theorem 3.1 in \cite{drusvyatskiy2019efficiency},
	\begin{equation}\label{subgrad_expression}
		\partial F(z) = \nabla f(z) + [\nabla c(z)]^\top \partial h(c(z)), \quad \forall z\in\mathbb{R}^n.
	\end{equation}
	The first-order optimality condition for \eqref{global-conv-def-tilde-S} indicates that  there exists $\xi_k\in [\mathrm{T}_{z_k}\mathcal{M}]^\perp$ such that 
	\begin{equation}\label{env_manpl}
		\frac{1}{t'}(z_k - \tilde{S}_{t'}(z_k)) + \xi_k\in \partial F(\tilde{S}_{t'}(z_k)).
	\end{equation}
	Here $[\mathrm{T}_{z}\mathcal{M}]^\perp = \{\xi\mid \xi^\top v=0, \forall v\in \mathrm{T}_{z}\mathcal{M}\}$. We now prove that $\{\xi_k\}$ is bounded. 
	\eqref{eq_lim0_moreau} implies that $\sup_{k\in\mathbb{N}}\|z_k - \tilde{S}_{t'}(z_k)\|_2<\infty$.
	Together with the fact that $\{z_k: k\in\mathbb{N}\}$ is a bounded set because of the compactness of $\mathcal{M}$ from Assumption \ref{ass:problem}(d), we have that $\{\tilde{S}_{t'}(z_k): k\in\mathbb{N}\}$ is a bounded set. The boundedness, the Lipschitz continuity in Assumption \ref{ass:problem}(a)(b)(c) and \eqref{subgrad_expression} implies
	\begin{equation}\label{bound_subgrad}
		\sup_{k\in\mathbb{N}} \sup_{v\in \partial F(\tilde{S}_{t'}(z_k))}\|v\|_2<\infty.
	\end{equation}
	In addition, in \eqref{env_manpl}, $\frac{1}{t'}(z_k - \tilde{S}_{t'}(z_k))^\top \xi_k = 0$ because $\xi_k \in [\mathrm{T}_{z_k}\mathcal{M}]^\perp$ and $\frac{1}{t'}(z_k - \tilde{S}_{t'}(z_k))\in \mathrm{T}_{z_k}\mathcal{M}$ by \eqref{global-conv-def-tilde-S}. This implies
	\begin{equation}\label{bound_xi}
		\|\xi_k\|_2\leq \|\xi_k + \frac{1}{t'}(z_k - \tilde{S}_{t'}(z_k))\|_2,\forall k\in\mathbb{N}.
	\end{equation}
	\eqref{env_manpl}, \eqref{bound_subgrad} and \eqref{bound_xi} imply
	\begin{equation}\label{boundness_orth}
		\sup_{k\in\mathbb{N}}\|\xi_k\|_2<\infty.
	\end{equation} 
	For any subsequence $\{z_{k^s}\}_{s=0}^\infty$ of $\{z_k\}_{k=0}^\infty$ that converges to $z_\star$, we consider the sequence $\{(z_{k^s},\xi_{k^s})\}_{s=0}^\infty$. By \eqref{boundness_orth}, there also exists a subsequence $\{(z_{k(r)},\xi_{k(r)})\}_{r = 0}^\infty$ of $\{(z_{k^s},\xi_{k^s})\}_{s=0}^\infty$ that converges to $(z_\star,\xi_\star)$.
	
	The compactness of $\mathcal{M}$ and \eqref{eq_lim0_moreau} implies $\lim_{r\rightarrow \infty} z_{k(r)} = \lim_{r\rightarrow \infty} \tilde{S}_{t'}(z_{k(r)}) = z_\star\in\mathcal{M}$. 
	\eqref{env_manpl} together with Remark 1(ii) in \cite{bolte2014proximal} implies that
	\begin{equation}\label{eq_limit_rel0}
		\xi_\star\in\partial F(z_\star).
	\end{equation}
	Since the Euclidean projection $\mbox{Proj}_{\mathrm{T}_z\mathcal{M}}(\cdot)$ is smooth with respect to $z$ and $\xi_{k} = \mbox{Proj}_{[\mathrm{T}_{z_{k}}\mathcal{M}]^\perp}(\xi_{k})$, we have that 
	$\xi_\star = \lim_{r\rightarrow\infty}\xi_{k(r)} = \lim_{r\rightarrow\infty}\mbox{Proj}_{[\mathrm{T}_{z_{k(r)}}\mathcal{M}]^\perp}(\xi_{k(r)}) = \mbox{Proj}_{[\mathrm{T}_{z_{\star}}\mathcal{M}]^\perp}(\xi_{\star})$. Thus, \eqref{eq_limit_rel0} implies that $0\in\mbox{Proj}_{\mathrm{T}_{z_{\star}} \mathcal{M}}(\partial F(z_\star))$.
	This finishes the proof.
\end{proof}

\section{Subproblem Solver and Overall First-Order Oracle Complexity}\label{sec:subsolver_overall}

Note that the subproblem \eqref{eq_imanpl} is a structured convex problem. For the general smooth mapping $c(\cdot)$, we will use a first-order algorithm to approximately solve it such that \eqref{low-high0}-(LACC) or \eqref{low-high0}-(HACC) holds. Therefore, it is also important to analyze the oracle complexity needed for this first-order algorithm. Combining it with the main iteration complexity in Section \ref{sec:complexity}, we can analyze the first-order oracle complexity of IManPL, which is defined as the total subproblems iterations for solving all the subproblems \eqref{eq_imanpl}. When $c(\cdot)$ is the identity mapping, we find that solving \eqref{eq_imanpl} with a second-order algorithm might show better efficiency. Thus, the rest of this section uses the accelerated proximal gradient (Algorithm 1 in \cite{tseng2008accelerated}) to approximately solve the dual problem of \eqref{eq_imanpl} under the general $c(\cdot)$ such that \eqref{low-high0}-(LACC) or \eqref{low-high0}-(HACC) holds. Section \ref{sec:numerical} applies it to the sparse spectral clustering problem. Appendix \ref{subsec:assn} uses the adaptive semi-smooth Newton method (ASSN) \cite{xiao2018regularized} to solve \eqref{eq_imanpl} when $c(\cdot)$ is the identity mapping. Appendix \ref{sec:numerical_spca} applies it to the sparse principal component analysis.

\subsection{Subproblem solver and stopping criteria}\label{subsec:suff_stopping}
For the ease of discussion, we introduce some notation. In particular, we denote
$x = z - z_k\in\mathbb{R}^n, B_k = \nabla c(z_k)\in\mathbb{R}^{m\times n},c_k = \nabla f(z_k)\in\mathbb{R}^n, d_k = c(z_k)\in\mathbb{R}^m$ and 
$$\mathrm{T}_{z_k}\mathcal{M} = \{x\in\mathbb{R}^n:C_kx = 0\},$$
where $C_k\in\mathbb{R}^{\hat{n}\times n}$, $0\leq \hat{n} \leq n$ and $C_kC_k^\top  = I_{\hat{n}}$. Moreover, we denote $\hat{C}_k\in\mathbb{R}^{(n-\hat{n})\times n}$ as the orthogonal complement matrix for $C_k$, which means that $C_k\hat{C}_k^\top  = 0, \hat{C}_k\hat{C}_k^\top  = I_{n-\hat{n}}$. In this situation, we have that
$$\mathrm{T}_{z_k}\mathcal{M} = \{\hat{C}_k^\top s: s\in\mathbb{R}^{n-\hat{n}}\}.$$
With these notation, we can rewrite \eqref{eq_imanpl} equivalently as an unconstrained problem:
\begin{equation}\label{primal-problem}
	\min_s \ H_k(s) = f(z_k) + (\hat{C}_k c_k)^\top s + h\left((B_k\hat{C}_k^\top )s + d_k\right) + \frac{1}{2t_k}\|s\|_2^2,
\end{equation}
whose dual problem is:
\begin{equation}\label{dual-problem}
	\max_\lambda \ D_k(\lambda) = f(z_k)-\frac{t_k}{2}\|\hat{C}_k(B_k^\top \lambda + c_k)\|_2^2 - h_\star(\lambda) + \lambda^\top d_k,\lambda\in\mathbb{R}^m,
\end{equation}
where $h_\star: \mathbb{R}^m\rightarrow \mathbb{R}$ is the convex conjugate of $h$: $h_\star(\lambda) = \sup_{\lambda'\in\mathbb{R}^m} \lambda'^\top \lambda - h(\lambda').$ The link functions of \eqref{primal-problem} and \eqref{dual-problem} are 
\[s_k(\lambda) = -t_k\left(\hat{C}_k(B_k^\top \lambda + c_k)\right), \quad \mbox{and} \quad x_k(\lambda) = -t_k\left(\hat{C}_k^\top \hat{C}_k(B_k^\top \lambda + c_k)\right)\in \mathrm{T}_{z_k}\mathcal{M},
\]
and $H_k(s_k(\lambda)) = F_{t_k}(x_k(\lambda) + z_k;z_k).$
Noticing that $\hat{C}_k^\top \hat{C}_k = I_n - C_k^\top C_k$, we also have that
$$D_k(\lambda) = -\frac{t_k}{2}(B_k^\top \lambda + c_k)^\top (I_n - C_k^\top C_k)(B_k^\top \lambda + c_k) - h_\star(\lambda) + \lambda^\top d_k, \quad x_k(\lambda) = -t_k\left((I_n - C_k^\top C_k)(B_k^\top \lambda + c_k)\right).$$
This indicates that we do not need to explicitly calculate $\hat{C}_k$ in practice. Once we have an inexact dual solution $\lambda_k$ for \eqref{dual-problem}, we can get an inexact primal solution 
$$\tilde{z}_{k+1} = x_k(\lambda_k) + z_k\in \mathrm{T}_{z_k}\mathcal{M} + z_k.$$
By weak duality, we have the following verifiable stopping conditions that imply \eqref{low-high0}-(LACC) and \eqref{low-high0}-(HACC) respectively.
\begin{subequations}
	\begin{align}\label{low1}
		F_{t_k}(x_k(\lambda_k)+z_k;z_k) - D_k(\lambda_k) & \leq \rho_l\left(F(z_k) - F_{t_k}(x_k(\lambda_k)+z_k;z_k)\right), \ \rho_l>0, \\
		\label{high1}
		F_{t_k}(x_k(\lambda_k)+z_k;z_k) - D_k(\lambda_k) & \leq \frac{\rho_h}{2t_k}\|x_k(\lambda_k)\|_2^2, \ \rho_h\in (0,1/4).
	\end{align}
\end{subequations}

\subsection{Oracle Complexity}

Let $\dom h_\star:= \{\lambda\in\mathbb{R}^m: h_\star(\lambda)<\infty\}$. Based on discussions in the previous subsection, the complete description of the accelerated proximal gradient method for solving \eqref{dual-problem} is given in Algorithm \ref{Nesterov_erg}. The ergodic iterate $\lambda_{c,erg}$ is returned as the approximate dual solution to find the primal solution $x_k(\lambda_{c,erg})$.
\begin{algorithm}[ht]
	\caption{Accelerated Proximal Gradient (APG) for Solving \eqref{dual-problem}}
	\label{Nesterov_erg}
	\begin{algorithmic}
		\STATE {\bf Input:} $z_k\in\M$, $t_k>0$,  $\lambda^0\in\dom h_\star$, $\lambda_{c,erg}^{0} = \lambda_a^0=\lambda_b^0=\lambda_c^0 = \lambda^0$, $\gamma_0=1$, $\rho_l>0$ and $\rho_h\in (0,1/4)$, inexact type (IT) = (LACC) or (HACC).
		\FOR{$j = 0,1,2\ldots$}
		\STATE 
		\begin{align*}
			\lambda_c^{j+1} & = (1-\gamma_j)\lambda_a^j+\gamma_j\lambda_b^j, \\
			g^{j+1} & = t_kB_{k}(I_n - C_k^\top C_k)(B_k^\top\lambda_c^{j+1} + c_k) - d_k,\\
			\lambda_b^{j+1} &  = \argmin_{\lambda\in\mathbb{R}^m} \ \frac{\gamma_j}{2t_{kj}}\|\lambda - \lambda_b^{j}\|^2_2 + (\lambda - \lambda_c^{j+1})^\top g^{j+1} + h_\star(\lambda), \ t_{kj}>0, \\
			\lambda_a^{j+1} &  = (1-\gamma_j)\lambda_a^j+\gamma_j\lambda_b^{j+1},\\
			\gamma_{j+1} & =2/\left(1+\sqrt{1+4/\gamma_j^2}\right).
		\end{align*}
		\STATE Terminate if one of the following stopping criteria is satisfied (use \eqref{low2_erg} if (IT) = (LACC) and use \eqref{high2_erg} if (IT) = (HACC)). Here, $\lambda_{c,erg}^{j+1} = \left(\sum_{j'=0}^{j} \gamma_{j'}^{-1}\lambda_{c}^{j'+1}\right)/\left(\sum_{j'=0}^{j} \gamma_{j'}^{-1}\right)$. 
		\begin{align}\label{low2_erg}
			F_{t_k}(x_k(\lambda_{c,erg}^{j+1})+z_k;z_k) - D_k(\lambda_a^{j+1})) & \leq \rho_l(F(z_k) - F_{t_k}(x_k(\lambda_{c,erg}^{j+1})+z_k;z_k)) \\
			\label{high2_erg}
			F_{t_k}(x_k(\lambda_{c,erg}^{j+1})+z_k;z_k) - D_k(\lambda_a^{j+1})) & \leq \frac{\rho_h}{2t_k}\|x_k(\lambda_{c,erg}^{j+1})\|_2^2.
		\end{align}
		\ENDFOR
		\STATE {\bf Output:} $x_k^j=x_k(\lambda_{c,erg}^{j+1})$,  $\lambda_k^j = \lambda_a^{j+1}, \Tilde{z}_{k+1} =  x_k^j+ z_k$.
	\end{algorithmic}
\end{algorithm}
The step size $t_{kj}$ in Algorithm \ref{Nesterov_erg} can be chosen as
\begin{equation}\label{eq_sub_stepsize_chosen}
	t_{kj} = (t_k\|B_k(I_n - C_k^\top C_k)B_k^\top \|_2)^{-1},\forall j\in\mathbb{N}.
\end{equation}

The following lemma is adopted from Corollary 1 in \cite{tseng2008accelerated}. 
\begin{lemma}
	\label{rate_subproblem_updated}
	Denote $D_{h_\star} \in \sup_{\lambda_1, \lambda_2\in \dom h_\star} \|\lambda_1-\lambda_2\|_2.$
	For Algorithm \ref{Nesterov_erg} with $t_{kj}$ chosen in \eqref{eq_sub_stepsize_chosen}, there exists a constant $C>0$ such that, 
	$x_k^j\in \mathrm{T}_{z_k}\mathcal{M},\forall j\in\mathbb{N},$ and 
	\begin{equation}\label{ineq_sub_rate}
		F_{t_k}(x_k^j+z_k;z_k) - D_k(\lambda_{k}^j)\leq \frac{Ct_k\|B_{k}(I_n - C_k^\top C_k)B_k^\top\|_2D_{h_\star}^2}{(j+1)^2},\forall j\in\mathbb{N}_+.
	\end{equation}
\end{lemma}
\begin{remark}
	Under Assumption \ref{ass:problem}(b), which requires that $h(\cdot)$ is convex and $L_h-$Lipschitz continuous, we know that $\dom h_\star\subseteq \{\lambda\in\mathbb{R}^m:\|\lambda\|_2\leq L_h\}$. Thus, $D_{h_\star}\leq 2L_h$. Also note that each iteration of Algorithm \ref{Nesterov_erg} requires computing a proximal mapping of $h_\star$, and we call it a first-order oracle. 
\end{remark}

We now discuss the overall first-order oracle complexity of Algorithm \ref{alg:IManPL-ls} with subproblems \eqref{eq_imanpl} solved by Algorithm \ref{alg:IManPL-ls}. 
We use $J_{k}$ to denote the number of first-order oracle calls in the $k$-th iteration of calling Algorithm \ref{Nesterov_erg}. We use $k_\epsilon$ to denote the number of iterations of Algorithm \ref{alg:IManPL-ls} for obtaining an $\epsilon-$stationary point. Therefore, the total number of first-order oracles is
$J(\epsilon) := \sum_{k=0}^{k_\epsilon - 1}J_{k}$. 
We are now ready to present the main result of first-order oracle complexity. 
\begin{theorem}\label{thm_overall_complexity0}
	For Algorithm \ref{alg:IManPL-ls} with $t_k = t>0$ and the subproblems \eqref{eq_imanpl} solved by Algorithm \ref{Nesterov_erg} with \eqref{eq_sub_stepsize_chosen}, the following conclusions hold for any $\epsilon>0$.
	
	(a) When (IT) = (LACC), we have that
	$$J(\epsilon)\leq \left\lfloor\frac{F_0 - F_\star}{t^2c_{l}\epsilon^2}\right\rfloor\left\lfloor j_{\epsilon,l}\right\rfloor, \mbox{ where } j_{\epsilon,l} = \max\left\{1,\frac{1}{\epsilon}\sqrt{\frac{(1+\rho_l)\tilde{C}}{\rho_l}}\right\}.$$
	
	(b) When (IT) = (HACC), we have that
	$$J(\epsilon)\leq \left\lfloor \frac{F_0 - F_\star}{t^2c_{h}\epsilon^2}\right\rfloor\left\lfloor j_{\epsilon,h}\right\rfloor, \mbox{ where } j_{\epsilon,h} = \max\left\{1,\frac{1}{\epsilon}\sqrt{\frac{(1+\sqrt{\rho_h})^2\tilde{C}}{\rho_h}}\right\}.$$
	Here, 
	$\tilde{C} := 2C(\sup_{z\in\mathcal{M}} \|\nabla c(z)\|_2)^2D_{h_\star}^2$.
\end{theorem}
\begin{proof}{Proof of Theorem \ref{thm_overall_complexity0}}	
	(a) 
	Denote 
	$$j' = \max\left\{0,\left\lceil\frac{1}{\epsilon}\sqrt{\frac{(1+\rho_l)\tilde{C}}{\rho_l}}\right\rceil - 2\right\}$$
	and we consider any $k<k_\epsilon$. By Lemma \ref{rate_subproblem_updated}, we have 
	$$F_t(x_k^{j'+1}+z_k;z_k) - D_k(\lambda_k^{j'+1})\leq \frac{Ct\|B_{k}(I_n - C_k^\top C_k)B_k^\top\|_2D_{h_\star}^2}{(j'+2)^2}.$$
	Noticing that $(j'+2)^2\geq \frac{(1+\rho_l)\tilde{C}}{\epsilon^2\rho_l}$ and $(\sup_{z\in\mathcal{M}} \|\nabla c(z)\|_2)^2\geq \|B_k\|_2^2\geq \|B_{k}(I_n - C_k^\top C_k)B_k^\top\|_2$, we have
	\begin{equation}\label{proof_upper_gap_a}
		F_t(x_k^{j'+1}+z_k;z_k) - D_k(\lambda_k^{j'+1})\leq \frac{t\rho_l\epsilon^2}{2(1+\rho_l)}.
	\end{equation}
	Note that in the $k$-th iteration, we have not found an $\epsilon$-stationary point yet. Therefore, from Lemma \ref{strong_convex_Ft} we have
	\[
	F_t(z_k;z_k) - \min_{z\in \mathrm{T}_{z_k}\mathcal{M}+z_k} F_t(z;z_k)\geq \frac{t\epsilon^2}{2},
	\]
	which, together with \eqref{proof_upper_gap_a}, yields
	\[F_t(x_k^{j'+1}+z_k;z_k) - D_k(\lambda_k^{j'+1})\leq \frac{\rho_l}{1+\rho_l}\left(F_t(z_k;z_k) - \min_{z\in \mathrm{T}_{z_k}\mathcal{M}+z_k} F_t(z;z_k)\right).
	\]
	This means that 
	\begin{align*}
		& F_t(x_k^{j'+1}+z_k;z_k) - D_k(\lambda_k^{j'+1}) \\
		\leq & \rho_l(-F_t(x_k^{j'+1}+z_k;z_k) + D_k(\lambda_k^{j'+1}) + F_t(z_k;z_k) - \min_{z\in \mathrm{T}_{z_k}\mathcal{M}+z_k} F_t(z;z_k)).
	\end{align*}
	Weak duality of \eqref{eq_imanpl} and \eqref{dual-problem} 
	\[
	D_k(\lambda_k^{j'+1})  - \min_{z\in \mathrm{T}_{z_k}\mathcal{M}+z_k} F_t(z;z_k)\leq 0,
	\]
	Therefore, we further have
	\[
	F_t(x_k^{j'+1}+z_k;z_k) - D_k(\lambda_k^{j'+1})\leq  \rho_l\left(F_t(z_k;z_k) - F_t(x_k^{j'+1}+z_k;z_k)\right).
	\]
	Thus, by \eqref{low2_erg}, 
	$J_k\leq j'+1,\forall k<k_\epsilon.$ Together with Theorem \ref{thm_main_iteration_complexity} for (IT) = (LACC) that bounds $k_\epsilon$ and $J(\epsilon) := \sum_{k=0}^{k_\epsilon - 1}J_{k}$, we obtain the desired conclusion in part (a).
	
	(b) Denote 
	$$j'' = \max\left\{0,\left\lceil\frac{1}{\epsilon}\sqrt{\frac{(1+\sqrt{\rho_h})^2\tilde{C}}{\rho_h}}\right\rceil - 2\right\}$$
	and we again consider any $k<k_\epsilon$.
	Similar to finding \eqref{proof_upper_gap_a}, we have 
	\begin{equation}\label{proof_upper_gap_b}
		F_t(x_k^{j''+1}+z_k;z_k) - D_k(\lambda_k^{j''+1})\leq \rho_h\|z_k-S_{t}(z_k)\|_2^2/\left(2t(1+\sqrt{\rho_h})^2\right).
	\end{equation}
	By Lemma \ref{strong_convex_Ft} and weak duality, we have
	\[
	\frac{1}{2t}\|x_k^{j''+1}+z_k - S_t(z_k)\|_2^2\leq F_t(x_k^{j''+1}+z_k;z_k) - D_k(\lambda_k^{j''+1}),
	\]
	which implies  
	\begin{equation}\label{eq1_overall}
		\|x_k^{j''+1}+z_k - S_t(z_k)\|_2^2\leq \rho_h\|z_k-S_{t}(z_k)\|_2^2/(1+\sqrt{\rho_h})^2.
	\end{equation}
	By the Cauchy-Schwarz inequality, we have 
	\begin{equation}\label{eq_cauchy1}
		\frac{\rho_h}{2t}\|x_k^{j''+1}\|_2^2 
		\geq \frac{\rho_h}{2(1+\sqrt{\rho_h})t}\|z_k 
		- S_t(z_k)\|_2^2- \frac{\sqrt{\rho_h}}{2t}\|z_k + x_k^{j''+1} - S_t(z_k)\|_2^2.
	\end{equation}
	By \eqref{eq1_overall}, we have
	\[
	\frac{\rho_h}{2(1+\sqrt{\rho_h})t}\|z_k 
	- S_t(z_k)\|_2^2- \frac{\sqrt{\rho_h}}{2t}\|z_k + x_k^{j''+1} - S_t(z_k)\|_2^2\geq \rho_h\|z_k - S_t(z_k)\|_2^2/\left(2t(1+\sqrt{\rho_h})^2\right).
	\]
	Thus, together with \eqref{proof_upper_gap_b}
	\begin{equation*}
		F_t(x_k^{j''+1}+z_k;z_k) - D_k(\lambda_k^{j''+1})\leq \rho_h\|z_k-S_{t}(z_k)\|_2^2/\left(2t(1+\sqrt{\rho_h})^2\right)\leq \frac{\rho_h}{2t}\|x_k^{j''+1}\|_2^2.
	\end{equation*}
	Thus, by \eqref{high2_erg}, 
	$$J_k\leq j''+1,\forall k<k_\epsilon.$$ Together with Theorem \ref{thm_main_iteration_complexity} for (IT) = (HACC) that bounds $k_\epsilon$ and $J(\epsilon) := \sum_{k=0}^{k_\epsilon - 1}J_{k}$, we obtain the desired conclusion in part (b).
\end{proof}

\section{Numerical Experiments}\label{sec:numerical}
In this section, we apply our IManPL algorithm (Algorithm \ref{alg:IManPL-ls} with subproblem \eqref{eq_imanpl} solved by Algorithm \ref{Nesterov_erg}) to solve the sparse spectral clustering (SSC) problem \eqref{eq_ssc} and compare IManPL with ManPL \cite{wang2022manifold} that focuses on the same objective function \eqref{eq_ssc}. Due to the space constraint, we present the numerical results on solving the sparse PCA problem in the appendix. 
Following \cite{wang2022manifold}, we use the same implementation of ManPL and use the same synthetic and real data explored in \cite{park2018spectral} and \cite{wang2022manifold}.
Throughout this section, we assume that $r$ (the number of classes) in \eqref{eq_ssc} is known. All the codes were written in MATLAB and executed on a server with an Intel(R) Xeon(R) Gold 6226R CPU at 2.90 GHz. Each task is limited to 10 cores and 128 GB of memory. For our IManPL, we show the performance of both (IT) = (LACC) and (IT) = (HACC). We set $\rho_l = \rho_h = 0.2$. Motivated by \cite{chen2020proximal}, we determine the step size of IManPL in an adaptive manner:
\begin{equation}\label{eq_adaptive_stepsize}
	t_{k+1} = \begin{cases}
		\alpha_kt_k, & \alpha_k<1,\\
		2t_k, & \alpha_k = 1.
	\end{cases}
\end{equation}

We will compare the CPU time of our IManPL with that of ManPL. Our numerical experiment consistently shows that IManPL is more efficient than ManPL, and using (LACC) and (HACC) for IManPL performs similarly.

The remaining content of Section \ref{sec:numerical} is summarized as follows. Section \ref{subsec:implementation_ssc} provides the implementation details of IManPL for SSC and demonstrates its lower per-iteration computational complexity relative to ManPL.
Section \ref{comp_manpl} compares IManPL and ManPL. 

\subsection{Implementation Details of IManPL for SSC}\label{subsec:implementation_ssc}
In \eqref{eq_ssc}, $\mathcal{M} = \mbox{St}(N,r)$ ($N\geq r$). For $U\in\mathcal{M}$, $\mathrm{T}_{U}\mathcal{M} = \{V\in\mathbb{R}^{N\times r}: V^\top U + U^\top V = I_r\}$. Treating $\mbox{vec}(U)$ as the input for \eqref{problem_single} so that $n = Nr$ and $m = N^2$, we further have $f(\mbox{vec}(U)) = \left\langle U, SU\right\rangle, h(\cdot) = \kappa\|\cdot\|_1$, $c(\mbox{vec}(U)) = \mbox{vec}(UU^\top)$, $\nabla f(\mbox{vec}(U)) = 2\mbox{vec}(SU)$, and  $\nabla c(\mbox{vec}(U)) = (I_{N^2} + K_{NN})(U\otimes I_N)$ where $K_{NN}$ is the commutation matrix for $N\times N$ matrices. Next, we verify Assumptions \ref{ass:problem}. For (a), $L_f = 2\|S\|_2$ since $2\|S(U_1 - U_2)\|_F\leq 2\|S\|_2\|(U_1 - U_2)\|_F,\forall\  U_1,U_2\in\mathbb{R}^{N\times r}$. For (b), $L_h = \sqrt{N^2}\kappa = N\kappa$. For (c), since $\|I_{N^2} + K_{NN}\|_2\leq 2$ and $\|(U_1 - U_2)\otimes I_N\|_2 = \|U_1 - U_2\|_2\leq \|U_1 - U_2\|_F, \forall \ U_1,U_2\in\mathbb{R}^{N\times r}$, we have $L_c = 2$. For (d), the compactness is straightforward. \cite{chen2020proximal} also provides four examples of the retraction operation: the exponential mapping, the polar decomposition, the QR factorization, and the Cayley transformation. Following \cite{wang2022manifold}, we will always use the QR factorization for numerical experiments on \eqref{eq_ssc}.

Next, we show some details when implementing Algorithm \ref{alg:IManPL-ls} with the subproblem \eqref{eq_imanpl} solved by Algorithm \ref{Nesterov_erg}. Denote $U_k\in\mathcal{M}$ as the estimated solution to \eqref{eq_ssc} at the start of the $k-$th outer iteration. With some mild abuse of notation, \eqref{eq_imanpl} corresponds to
\begin{equation}\label{eq_ssc_sub_primal}
	\min_{V\in \mathrm{T}_{U_k}\mathcal{M}} F_{t_k}(U_k + V;U_k) = \mbox{tr}(U_k^\top SU_k) + \langle V, 2SU_k\rangle \nonumber + \frac{1}{2t_k}\|V\|_F^2 + \kappa\|\mbox{vec}(U_kU_k^\top + U_kV^\top + VU_k^\top)\|_1. 
\end{equation}
Following the discussions in Section \ref{subsec:suff_stopping}, the dual problem is given by
\begin{equation}\label{eq_ssc_sub_dual}
	\min_{\Lambda\in \mathbb{R}^{N\times N}:\|\mbox{vec}(\Lambda)\|_\infty\leq \kappa} -D_k(\Lambda):= -\mbox{tr}(U_k^\top SU_k) +  2t\|(I_N - U_kU_k^\top)(S+\hat{\Lambda})U_k\|_F^2 - \langle U_kU_k^\top, \Lambda \rangle
\end{equation}
and the link function is given by
\begin{equation}\label{link_func_ssc}
	V_{k}(\Lambda) = -2t_k(I_N - U_kU_k^\top)(S+\hat{\Lambda})U_k.
\end{equation}
Here, $\hat{\Lambda} = (\Lambda + \Lambda^\top)/2$. In addition,
$-\nabla D_k(\Lambda) = 2t_k(I_N - U_kU_k^\top)(S+\hat{\Lambda})(U_kU_k^\top) + 2t_k(U_kU_k^\top)(S+\hat{\Lambda})(I_N - U_kU_k^\top) - U_kU_k^\top.$ When applying APG (Algorithm \ref{Nesterov_erg}) to solve \eqref{eq_ssc_sub_dual}, the dominant computational cost for each subproblem iteration is to calculate $-\nabla D_k(\Lambda)$. By splitting the matrix multiplications into multiple steps, the cost is $O(N^2r)$. In contrast, each subproblem iteration of ManPL for \eqref{eq_imanpl} requires solving multiple linear systems with $N^2\times N^2$ matrices when using the proximal point algorithm along
with the adaptive regularized semi-smooth Newton method \cite{xiao2018regularized}, which has a much higher computational cost.

\subsection{Comparisons with ManPL}\label{comp_manpl}
Following \cite{wang2022manifold}, we let $\kappa\in \{10^{-2},10^{-3},10^{-4},10^{-5}\}$. For each given dataset, we first implement ManPL under each $\kappa$. Then we apply $K-$Means on each generated $U\in\mathbb{R}^{N\times r}$ 10 times and calculate the corresponding mean NMI score, which measures the clustering accuracy (see Section D of \cite{park2018spectral}). We compare the CPU time of solving \eqref{eq_ssc} for ManPL and IManPL under the best $\kappa$ in terms of the largest mean NMI score. Here, to ensure a fair comparison, we run ManPL first and IManPL second. We terminate IManPL after it finds a solution with a smaller objective function value than ManPL. For any given dataset, we initialize both methods by solving \eqref{eq_ssc} with $\kappa = 0$, which is a standard step in spectral clustering and can be done by the eigendecomposition of the Laplacian matrix $S$. The CPU time for the initialization is not included in the reported results.

\subsubsection{Experiments on Synthetic Datasets.}\label{subsec:synthetic}

Following \cite{park2018spectral} and \cite{wang2022manifold}, we generate two synthetic datasets with $r = 5$ clusters as follows.


\begin{itemize}
	\item \textbf{Synthetic Data 1 (Circle with Gaussian Noise).}  
	We generate $C=5$ clusters in a latent two-dimensional space. The cluster centers are placed uniformly on a circle of radius $r=1$, i.e.,
	\[
	\mathbf{c}_\ell = \big(\cos(2\pi \ell / C), \, \sin(2\pi \ell / C)\big), 
	\quad \ell = 1, \dots, C.
	\]
	For each cluster $\ell$, we sample $n_c=100$ points around $\mathbf{c}_\ell$ by adding Gaussian perturbations:
	\[
	\mathbf{x}_{\ell,i} = \mathbf{c}_\ell + \sigma \cdot \boldsymbol{\epsilon}_{\ell,i}, 
	\quad \boldsymbol{\epsilon}_{\ell,i}\sim \mathcal{N}(\mathbf{0}, I_2),
	\]
	where $\sigma = 0.3 r$ controls the noise scale. Concatenating all clusters gives $N=C n_c=500$ samples in $\mathbb{R}^2$.  
	To embed the data into a higher-dimensional space, we draw a random Gaussian projection matrix $P\in\mathbb{R}^{p\times 2}$ with $p=10$, and compute $\tilde{X}=XP^\top$. Finally, we add an additional heterogeneous Gaussian noise matrix $\eta\in\mathbb{R}^{N\times p}$ with entries drawn from $0.3r\cdot \mathcal{N}(0,1)$. The final dataset is
	\[
	X = \tilde{X} + \eta \in \mathbb{R}^{500 \times 10}.
	\]
	
	\item \textbf{Synthetic Data 2 (Low-Dimensional Linear Mixture).}  
	We construct a latent basis $B'\in\mathbb{R}^{C\times d}$ with $C=5$ clusters and $d=5$ latent dimensions. Each row of $B'$ is sampled independently from Gaussian distributions with heterogeneous variances (to model cluster imbalance). We extend this to a $p=10$ dimensional ambient space by defining
	\[
	B = [B', \, 0_{C\times (p-d)}] \in \mathbb{R}^{C\times p}.
	\]
	Each data point is assigned a cluster label $z_i \in \{1,\dots,C\}$ drawn uniformly at random. We encode the labels in an indicator matrix $Z\in\mathbb{R}^{N\times C}$ with $Z_{i\ell}=1$ if $z_i=\ell$ and $0$ otherwise.  
	The observed data are then generated as
	\[
	X = ZB + W,
	\]
	where $W\in\mathbb{R}^{N\times p}$ is a noise matrix with i.i.d.\ entries sampled from $\mathcal{N}(0,\sigma^2)$, with $\sigma=0.2r$ to represent $20\%$ noise relative to the embedding radius. We set $N=500$ throughout.
\end{itemize}

\cite{park2018spectral} and \cite{wang2022manifold} suggest constructing multiple similarity matrices to form the SSC problem. Thus, following Section 5 of \cite{wang2022manifold}, we apply the same strategy to generate 55 similarity matrices and construct the Laplacian matrix $S$ in \eqref{eq_ssc} based on the mean of the 55 matrices.
Table \ref{table:single_kernel_syndata_manpl_imanpl} summarizes the comparison and shows the advantage of IManPL. The table also shows that (LACC) and
(HACC) perform similarly.
\begin{table}[htbp]
	\centering
	\begin{tabular}{|c|c|c|c|}
		\hline
		Datasets & ManPL & IManPL-(LACC) & IManPL-(HACC) \\
		\hline
		Synthetic data 1 & 3.32 & 2.20  & \textbf{2.13} \\
		\hline
		Synthetic data 2 & 7.13 & 3.19  & \textbf{2.52} \\
		\hline
	\end{tabular}
	\caption{Comparison of CPU time between ManPL and IManPL for synthetic datasets. The best one for each dataset is highlighted.}
	\label{table:single_kernel_syndata_manpl_imanpl}
\end{table}

\subsubsection{Experiments on Single-cell RNA Sequence (scRNA-seq) Data.}\label{subsubsec:single_cell}
Clustering cells and identifying subgroups are important in high-dimensional scRNA-seq data analysis \cite{park2018spectral}. In what follows, we focus on experiments for candidate algorithms
to cluster scRNA-seq data on nine real datasets used in \cite{park2018spectral} and \cite{wang2022manifold}. Table \ref{scRNA-seq-single_dataset_des} summarizes the datasets.
\begin{table}[htbp]
	\centering
	\begin{tabular}{cccc}
		\toprule 
		Dataset &  Sample Size $(N)$& Dimension $(p)$ & Classes $(r)$\\
		\midrule
		\cite{treutlein2014reconstructing} & 80 & 959 & 5 \\
		\cite{ting2014single} & 114 & 14405 & 5\\
		\cite{deng2014single} & 135 & 12548 & 7\\
		\cite{buettner2015computational} & 182 & 8989 & 3\\
		\cite{pollen2014low} & 249 & 14805 & 11\\
		\cite{schlitzer2015identification} & 251 & 11834 & 3 \\
		\cite{tasic2016adult} & 1727 & 5832 & 49 \\
		\cite{zeisel2015cell} & 3005 & 4412 & 48 \\
		\cite{macosko2015highly} & 6418 & 12822 & 39 \\
		\bottomrule 
	\end{tabular}
	\caption{Description of several scRNA-seq datasets}
	\label{scRNA-seq-single_dataset_des}
\end{table}
We construct the Laplacian matrix $S$ using the same method as in Section \ref{subsec:synthetic}. 
Table \ref{table:single_compare_cputime2_equalweight} summarizes the comparison and shows the advantage of IManPL. The table also shows that (LACC) and
(HACC) perform similarly. 
\begin{table}[H]
	\centering
	\begin{tabular}{|c|c|c|c|}
		\hline
		Datasets & ManPL & IManPL-(LACC) & IManPL-(HACC) \\
		\hline
		\cite{treutlein2014reconstructing}  & 0.021 & \textbf{0.012} & 0.019 \\
		\cite{ting2014single} & 0.59 & 0.23 & \textbf{0.12} \\
		\cite{deng2014single}  & 0.15 & 0.073 & \textbf{0.070} \\
		\cite{buettner2015computational}  &  0.30 & 0.16 & \textbf{0.15} \\
		\cite{pollen2014low}  & 4.98 & 2.13 & \textbf{1.29} \\
		\cite{schlitzer2015identification} & 0.24 & \textbf{0.074} & 0.14 \\
		\cite{tasic2016adult}  & 178.46 & \textbf{8.12} & 12.99 \\
		\cite{zeisel2015cell} & 225.28 & \textbf{11.40} & 12.37 \\
		\cite{macosko2015highly} & 902.55 & 51.61 & \textbf{48.53} \\
		\hline
	\end{tabular}
	\caption{Comparison of CPU time between ManPL and IManPL for scRNA-seq datasets. The smallest record for each dataset is highlighted.}
	\label{table:single_compare_cputime2_equalweight}
\end{table}

\section{Conclusion}\label{sec:conclusion}
We proposed the inexact manifold proximal linear (IManPL) algorithm for nonsmooth Riemannian composite optimization, which introduces adaptive subproblem stopping conditions. Our analysis established that IManPL achieves the $O(1/\epsilon^2)$ main iteration complexity and the overall $O(1/\epsilon^3)$ first-order oracle complexity, matching or improving upon ManPL and existing Euclidean counterparts. We also proved that the accumulation points reached by IManPL are stationary solutions. Numerical experiments on sparse spectral clustering and sparse principal component analysis confirm that IManPL outperforms ManPL or ManPG in computational efficiency. These results highlight the value of adaptive inexactness and broaden the applicability of proximal methods to large-scale manifold optimization problems.
\bibliographystyle{IEEEtran}
\bibliography{inexact_pl_refs,reference_arv}
\appendix
\section{Alternative Subproblem Solver When $c(\cdot)$ Is the Identity Mapping}\phantomsection\label{subsec:assn}
In this section, we focus on the situation that $m=n, c(z) = z,\forall z\in\mathbb{R}^n$. In this case, the first-order Algorithm \ref{Nesterov_erg} might not be the best choice. Thus, we introduce the adaptive regularized semi-smooth Newton's method (ASSN) for solving \eqref{eq_imanpl}. It is a second-order algorithm proposed by \cite{xiao2018regularized} and used by \cite{chen2020proximal} in their manifold proximal gradient method (ManPG).

Following the notations in Section \ref{subsec:suff_stopping}, we can also rewrite the optimization problem in \eqref{eq_imanpl} as
\begin{equation}
	\min_{\tilde{x}\in\mathbb{R}^n} \tilde{H}_k(\tilde{x}) := f(z_k) + c_k^\top \tilde{x} + h(\tilde{x} + d_k) + \frac{1}{2t_k}\|\tilde{x}\|_2^2,\ s.t.\ C_k\tilde{x} = 0.
\end{equation}
The Lagrangian function is as follows:
\begin{equation}
	\tilde{L}_k(\tilde{x};\mu) := \tilde{H}_k(\tilde{x}) - \mu^\top C_k\tilde{x}. 
\end{equation}
Thus, the dual problem is
\begin{equation}
	\max_{\mu\in\mathbb{R}^{\hat{n}}} \tilde{D}_k(\mu) := \min_{\tilde{x}\in\mathbb{R}^n}\tilde{L}_k(\tilde{x};\mu). 
\end{equation}
Section 4.2 in \cite{chen2020proximal} shows that $\tilde{D}_k(\mu)$ is smooth and concave, $\nabla \tilde{D}_k(\mu)$ is Lipschitz continuous, but there is no guarantee on the strong concavity of $\tilde{D}_k(\mu)$. Thus, they use ASSN to solve the equation $\nabla \tilde{D}_k(\mu) = 0$. Denote the sequence of inexact solutions that ASSN iteratively generates as $\{\mu_k^j\}_{j=0}^\infty$. The corresponding primal inexact solutions are $\tilde{x}_k^j = \argmin_{\tilde{x}\in\mathbb{R}^n} \tilde{L}_k(\tilde{x};\mu_k^j), j\in\mathbb{N}$, and we can show that $\argmin_{\tilde{x}\in\mathbb{R}^n} \tilde{L}_k(\tilde{x};\mu)$ is Lipschitz continuous with regard to $\mu.$ Theorem 3.10 in \cite{xiao2018regularized} shows that $\lim_{j\rightarrow \infty} \mu_k^j$ exists, and 
\begin{equation}\label{eq_limit_dual}
	\lim_{j\rightarrow \infty} \mu_k^j =: \mu_k^\star\in \argmax_{\mu\in \mathbb{R}^{\hat{n}}} \tilde{D}_k(\mu).
\end{equation}
Thus, $z_k + \lim_{j\rightarrow \infty} \tilde{x}_k^j = S_{t_k}(z_k)$ and $C_k\lim_{j\rightarrow \infty} \tilde{x}_k^j = 0$.

Next, we introduce our new subproblem termination conditions designed for ASSN. Knowing that ASSN does not guarantee $C_k\tilde{x}_k^j = 0$, we denote $\hat{x}_k^j = \Proj_{\mathrm{T}_{z_k}\mathcal{M}}(\tilde{x}_k^j)$ and design sufficient subproblem termination conditions as follows:
\begin{equation}\label{low2}
	F_{t_k}(\hat{x}_k^j+z_k;z_k) - \tilde{D}_k(\mu_k^j)\leq \rho_l\left(F(z_k) - F_{t_k}(\hat{x}_k^j+z_k;z_k)\right), \rho_l>0.
\end{equation}
\begin{equation}\label{high2}
	F_{t_k}(\hat{x}_k^j+z_k;z_k) - \tilde{D}_k(\mu_k^j)\leq \frac{\rho_h}{2t_k}\|\hat{x}_k^j\|_2^2, \rho_h\in (0,1/4).
\end{equation}
When (IT) = (LACC), we terminate ASSN when it finds $\mu_k^j$ such that \eqref{low2} holds. When (IT) = (HACC), we terminate ASSN when it finds $\mu_k^j$ such that \eqref{high2} holds. Then, we set $\tilde{z}_{k+1} = \hat{x}_k^j+z_k$. By weak duality, \eqref{low2} implies \eqref{low-high0}-(LACC) and \eqref{high2} implies \eqref{low-high0}-(HACC). 

Next, we discuss \cite{huang2023inexact,huang2022riemannian2} that also use adaptive stopping conditions when solving the subproblem \eqref{eq_imanpl} via Newton's method. Different from our methods that use the primal-dual gap for adaptive stopping conditions, their methods use $\|\nabla \tilde{D}_k(\mu_k^j)\|_2$. In addition, 
\cite{huang2023inexact}'s condition only works for sufficiently small $t_k$, and \cite{huang2022riemannian2}'s condition is
$$\|\nabla \tilde{D}_k(\mu_k^j)\|_2\leq \sqrt{4t_k^2L_h^2 + \|\hat{x}_k^j\|_2^2/2} - 2t_kL_h,$$
which depends on $L_h$ that might be unknown. This demonstrates better practical adaptiveness of our \eqref{low2} and \eqref{high2}.

\section{Numerical Experiments on Sparse Principal Component Analysis (SPCA)}\phantomsection\label{sec:numerical_spca}
In this section, we apply our IManPL algorithm (Algorithm \ref{alg:IManPL-ls}) with the subproblem \eqref{eq_imanpl} solved by the
ASSN as discussed in Appendix \ref{subsec:assn} for the SPCA problem \eqref{eq_spca} where $c(\cdot)$ is the identity mapping. We compare IManPL with ManPG \cite{chen2020proximal}. ManPG\footnote{Readers can find the code for \cite{chen2020proximal} in \url{https://github.com/chenshixiang/ManPG}.} can be treated as a special situation of ManPL \cite{wang2022manifold} with $c(\cdot)$ being the identity mapping and the subproblems \eqref{eq_imanpl} solved by ASSN that tries to reach a high precision. ManPG also implements \eqref{eq_imanpl} and \eqref{eq_retraction}, using Armijo backtracking line search to determine $\alpha_k$. Both ManPG and IManPL use the polar decomposition for the retraction operation in \eqref{eq_retraction}. 
We initialize both algorithms by the singular value decomposition of a random $N\times r$ matrix whose elements independently follow the standard Gaussian distribution. In addition, when IManPL solves \eqref{eq_imanpl} via ASSN, all the implementation details, except the stopping conditions \eqref{low2} or \eqref{high2}, follow the code of \cite{chen2020proximal}. Next, we introduce the step sizes $\{t_k\}_{k\in\mathbb{N}}$ that is used in \eqref{eq_imanpl} for both methods. We adopt the design in \cite{chen2020proximal}. $t_0=1/(2\|A\|_2^2)$ and \begin{equation}\label{eq_adaptive_stepsize_spca}
	t_{k+1} = \begin{cases}
		t_kv, & \alpha_k = 1,\\
		\max\{t_0,t_k/v\}, & \alpha_k < 1
	\end{cases}
\end{equation} 
for any $k\in\mathbb{N}$, where $v\geq 1$. When $v=1$, the step sizes are fixed. When $v>1$, the step sizes are adaptive. Later on, we use the algorithm name ManPG for the case $v=1$ and the name ManPG-Ada for the case $v = 1.01$. We will also set $v = 1.01$ for IManPL and use the names IManPL-ASSN-(LACC) and IManPL-ASSN-(HACC)
to highlight the subproblem solver ASSN and 
the choices for (IT). In what follows, we conduct numerical experiments to compare the CPU time of ManPG, ManPG-Ada, and the two versions of IManPL-ASSN. Since ASSN is a second-order algorithm, we compare CPU time, the number of main iterations, and the mean subproblem iterations for solving \eqref{eq_imanpl} across candidate methods. The results show that IManPL-ASSN has the best efficiency, and using (LACC) and (HACC) performs similarly. IManPL-ASSN consistently outperforms ManPG. In addition, IManPL-ASSN is more efficient than ManPG-Ada under relatively large $\kappa$, and they perform similarly under small $\kappa$.

Next, we introduce our simulated experiment environment that follows Section 6.3 of \cite{chen2020proximal}. The random data matrices $A\in\mathbb{R}^{N_1\times N}$ are generated in the following way. We
first generate a random matrix $\tilde{A}$ such that each element independently follows the standard Gaussian distribution, then shift the columns of $\tilde{A}$
so that their mean is equal to 0, and lastly, normalize the columns so that their Euclidean norms are equal
to one. In all tests, $N_1 = 500, N=1000$ and we test $r\in\{10,50\},\kappa\in\{0.1,0.3\}$. For a given combination of $r,\kappa$, we generate 10 replications. In each replication, we use the same initialization for all the candidate algorithms. ManPG follows the termination condition in the code of \cite{chen2020proximal}, and other algorithms terminate when they reach a smaller objective function value than ManPG.

Tables \ref{table:single_compare_cputime1}, \ref{table:compare_cputime_new_group2}, \ref{table:compare_cputime_new_group_4}, and  \ref{table:compare_cputime_new_group_5} provide the experiment results that compare the CPU time, the main iteration numbers, and the mean subproblem iteration numbers of the candidate algorithms. First, we discuss the common conclusions. These tables demonstrate that adaptive step sizes can improve performance: all other methods require fewer main iterations than ManPG, and ManPG-Ada outperforms ManPG in CPU time. This conclusion is consistent with the numerical experiments in \cite{chen2020proximal}. In addition, IManPL-ASSN and ManPG-Ada require similar numbers of main iterations, so we do not need to solve the subproblems to a very high accuracy. In addition, (LACC) and (HACC) perform similarly.

Next, we discuss the differences in results between ManPG-Ada and IManPL-ASSN. When $\kappa = 0.1$ (Tables \ref{table:single_compare_cputime1} and \ref{table:compare_cputime_new_group_4}), which means that the sparsity level is relatively low, the mean subproblem iteration numbers are similar between ManPG-Ada and IManPL-ASSN for using either (LACC) or (HACC). This leads to their similar CPU time. When $\kappa = 0.3$ (Tables \ref{table:compare_cputime_new_group2} and \ref{table:compare_cputime_new_group_5})), which means that the sparsity level is relatively high, IManPL-ASSN requires fewer subproblem iterations for using either (LACC) or (HACC) and outperforms ManPG-Ada in terms of CPU time.
\begin{table}[H]
	\centering
	\begin{tabular}{|c|c|c|c|}
		\hline
		& main iterations & mean sub-iterations & CPU Time \\
		\hline
		ManPG  & 1330.20 (298.77) & 1.07 (0.02) & 2.47 (0.78) \\
		ManPG-Ada  & 621.00 (418.98) & 1.48 (0.15) & 1.72 (1.36) \\
		IManPL-ASSN-(LACC) & 515.10 (133.76) & 1.00 (0.00) & 1.46 (0.77) \\
		IManPL-ASSN-(HACC) & 515.10 (133.76) & 1.00 (0.00) & 2.33 (3.20) \\
		\hline
	\end{tabular}
	\caption{Experiments for SPCA with $r = 10,\kappa = 0.1$. Results for the replications are reported in the form ``mean (standard deviation)". For one replication, ``mean sub-iterations" is short for the mean iterations for solving all the subproblems \eqref{eq_imanpl}.}
	\label{table:single_compare_cputime1}
\end{table}

\begin{table}[H]
	\centering
	\begin{tabular}{|c|c|c|c|}
		\hline
		& main iterations & mean sub-iterations & CPU Time \\
		\hline
		ManPG  & 1435.10 (317.72) & 1.29 (0.05) & 3.67 (2.38) \\
		ManPG-Ada  & 294.70 (47.93) & 3.12 (0.23) & 1.10 (0.95) \\
		IManPL-ASSN-(LACC) & 283.80 (47.71) & 1.22 (0.11) & 0.66 (0.19) \\
		IManPL-ASSN-(HACC) & 283.90 (46.83) & 1.23 (0.12) & 0.66 (0.19) \\
		\hline
	\end{tabular}
	\caption{Experiments for SPCA with $r = 10,\kappa = 0.3$. Results for the replications are reported in the form ``mean (standard deviation)". For one replication, ``mean sub-iterations" is short for the mean iterations for solving all the subproblems \eqref{eq_imanpl}.}
	\label{table:compare_cputime_new_group2}
\end{table}

\begin{table}[H]
	\centering
	\begin{tabular}{|c|c|c|c|}
		\hline
		& main iterations & mean sub-iterations & CPU Time \\
		\hline
		ManPG  & 3625.70 (1011.42) & 1.22 (0.07) & 73.39 (22.63) \\
		ManPG-Ada  & 1488.20 (858.57) & 1.95 (0.10) & 21.04 (11.25) \\
		IManPL-ASSN-(LACC) & 1459.10 (899.43) & 1.53 (0.04) & 20.86 (14.62) \\
		IManPL-ASSN-(HACC) & 1420.70 (801.29) & 1.59 (0.05) & 19.24 (10.76) \\
		\hline
	\end{tabular}
	\caption{Experiments for SPCA with $r = 50,\kappa = 0.1$. Results for the replications are reported in the form ``mean (standard deviation)". For one replication, ``mean sub-iterations" is short for the mean iterations for solving all the subproblems \eqref{eq_imanpl}.}
	\label{table:compare_cputime_new_group_4}
\end{table}

\begin{table}[H]
	\centering
	\begin{tabular}{|c|c|c|c|}
		\hline
		& main iterations & mean sub-iterations & CPU Time \\
		\hline
		ManPG  & 1450.90 (318.20) & 3.83 (0.66) & 42.66 (5.29) \\
		ManPG-Ada  & 299.20 (37.69) & 13.87 (2.20) & 31.16 (3.44) \\
		IManPL-ASSN-(LACC) & 295.30 (68.40) & 1.83 (0.12) & 3.55 (0.75) \\
		IManPL-ASSN-(HACC) & 277.90 (60.89) & 1.86 (0.13) & 3.35 (0.72) \\
		\hline
	\end{tabular}
	\caption{Experiments for SPCA with $r = 50,\kappa = 0.3$. Results for the replications are reported in the form ``mean (standard deviation)". For one replication, ``mean sub-iterations" is short for the mean iterations for solving all the subproblems \eqref{eq_imanpl}.}
	\label{table:compare_cputime_new_group_5}
\end{table}

\end{document}